\DeclareMathOperator{\inte}{int}
\newtheorem{theorem}{Theorem}
\newtheorem{corollary}[theorem]{Corollary}
\theoremstyle{definition}
\newtheorem{remark}[theorem]{Remark}
\newtheorem{definition}{Definition}
\newcommand{\R}{{\mathbb R}}
\title{A computer-assisted proof of symbolic dynamics\\ in Hyperion's inner rotation model}
\author{Anna Gierzkiewicz}
\email{anna.gierzkiewicz@urk.edu.pl}
\address{Department of Applied Mathematics, University of Agriculture in Krak\'ow,
ul. Balicka 253c,
30--198 Krak\'ow, Poland
}
\author{Piotr Zgliczy\'nski}
\email{umzglicz@cyf-kr.edu.pl}
\address{Institute of Computer Science, Jagiellonian University,
ul. \L ojasiewicza 6, 30-348 Krak\'ow, Poland
}
\thanks{Work of A.G. and P.Z. was supported by National Science Center
(NCN) of Poland under project No. 2015/19/B/ST1/01454.}
\begin{document}

\begin{abstract}

The rotation of Hyperion is often modelled by equations of motion of an ellipsoidal satellite. The model is expected to be chaotic for large range of parameters. The paper contains a rigorous computer-assisted proof of the existence of symbolic dynamics in its dynamics by the use of CAPD C++ library.


\end{abstract}

\maketitle


\section{Introduction}


A motivation of our study was the broadly known example of chaotic motion in the Solar System, \textit{i.e.} the tumbling of Hyperion, one of Saturn's moons. The shape of Hyperion significantly differs from spherical: it is roughly an ellipsoid with the rate of principal moments of inertia $\frac{\Theta_2-\Theta_1}{\Theta_3}\approx 0.26$, where $\Theta_3>\Theta_2>\Theta_1$. Hyperion's Keplerian elliptic orbit (long semi-axis $a=1, 500, 933$ km, period $T=21.276$ d) is non-circular with eccentricity $e\approx 0.1$ \cite{K}.

The analysis of \textit{Voyager} and \textit{Voyager 2} observational data could not fit Hyperion's rotation into any certain period, which was the base of suspection that its tumbling may be chaotic. The natural attempt \cite{W} to explain this phenomenon was to compare it to a classical \cite{Danby} model of inner rotation of an oblate satellite, see \cite[Ex. 27.5]{Greiner}. The model assumes that the satellite is of ellipsoidal shape and orbits a massive distant body in a Keplerian ellipse orbit with significant eccentricity. It also states that the longest (or the shortest) ellipsoid's axis is always perpendicular to the orbit's plane, which is crucial to simplicity of the model: it implies one axis of rotation only. This last assumption is justified as the basic analysis of Euler equations of the rigid body motion shows that this `normal' state is stable \cite{Danby}.

The model does not, unfortunately, fit sufficiently: the key assumption on perpendicularity of the rotation axis to the orbit plane is not true in Hyperion's case. Nevertheless, the model is interesting and applicable in many other cases, like Moon's or Mercury's libration. Proving the existence of chaos in it can also be helpful in the more general modelling of Hyperion's motion. This is the reason why in this paper we fix the parameters to Hyperion's case: $\omega = 0. 89 \pm 0. 22$ and $e=0.1042$ \cite{K}.

The model with the above parameters was, as mentioned, explored in some articles, such as \cite{B,K,W,Tarnopolski2015}. The statement of chaotic rotation is based there on the picture of Poincar\'e section $S:\{f=0\}$, which visibly contains a large chaotic region (see also Fig \ref{fig:P}). The Lyapunov Characteristic Exponents were also numerically calculated. The rigorous proof of chaoticity would set the mathematical ground to their theses and also can present an elegant application for the topological methods combined with rigorous numerics\cite{CAPD}. In this paper we understand the existence of chaos in a dynamical system as the semiconjugacy of its dynamics onto the shift dynamics on the space of bi-infinite sequences of two symbols. Such a phenomenon is known in literature as \emph{symbolic dynamics} \cite{Morse, Moser}.

As many proofs based on the interval arithmetic, our investigation is valid for a small interval of parameters, containing the values for Hyperion. The methods can be easily applied to other (but precise) values of $\omega$, $e$. The more general question for our future work is to explore the relation between the value of parameters and the size and structure of the chaotic region on the section $S$, which could be applied in modelling rotation of other objects of the universe.

Another question is the way of generalizing the model itself. If it cannot describe the tumbling of Hyperion, then the complete set of Euler equations could be investigated. This extends the phase space from $3$ to $7$ dimensions and requires the use of more complex methods. The other way to make the model more accurate is to consider the impact of Titan on the Saturn--Hyperion system as a correction in the rotation equation \cite{Tarnopolski2016} or as a change of parameter $e$ to become an independent variable.

The paper is organized as follows: in Section \ref{sec:model} we present the model, the system of ODEs and its basic properties. Sections \ref{sec:tools} and \ref{sec:chaos} contain description of main topological tools used in our work. The last Section \ref{sec:results} presents
our results for the symbolic dynamics in our model.


\section{The model}\label{sec:model}
\subsection{The equations}

We shortly recollect the derivation of the model \cite[Eq. 14.3.1]{Danby}.

	\begin{figure}[h]
	\includegraphics[height=5cm]{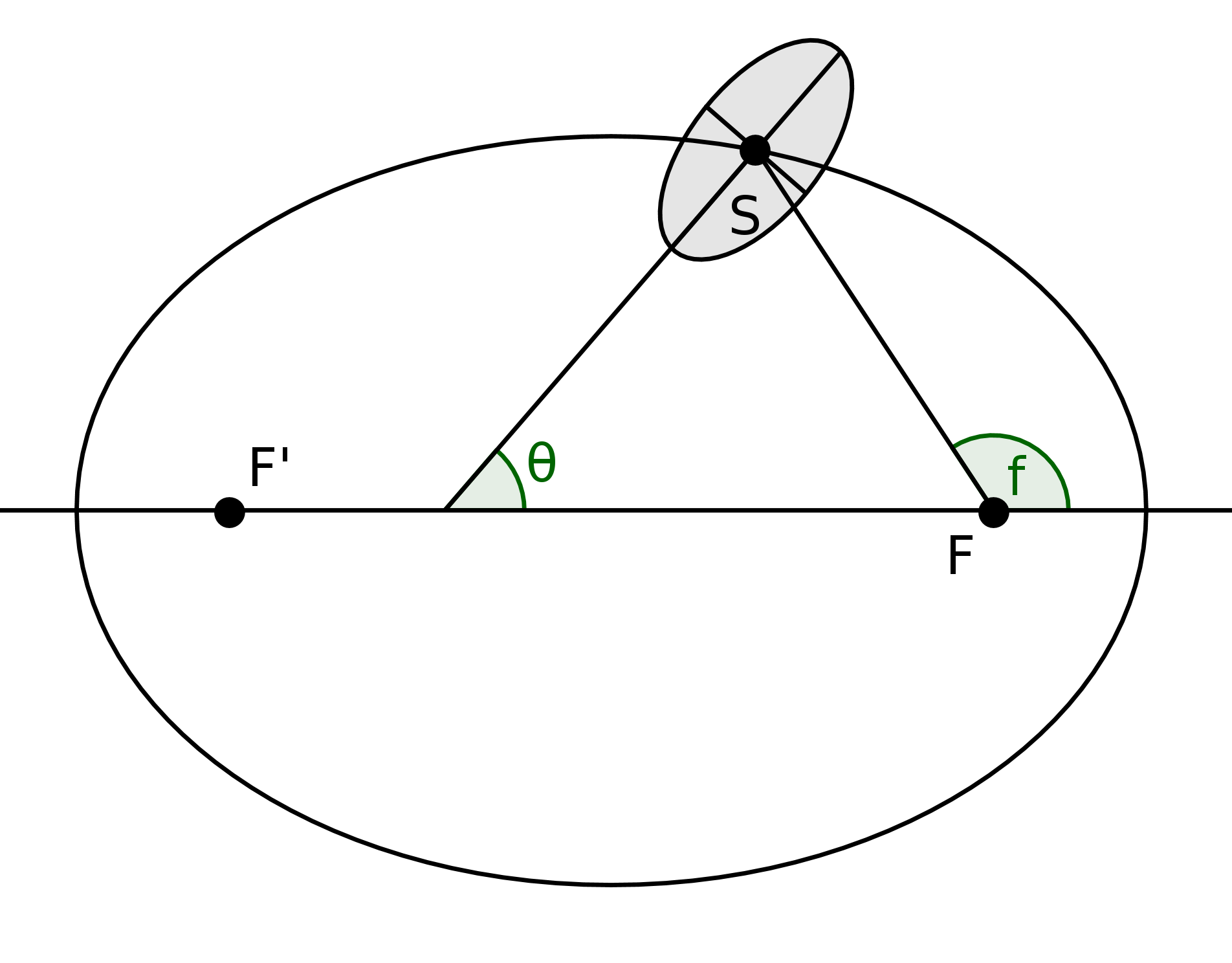}
	\caption{\label{fig:satellite} Illustration of the model.}
	\end{figure}

An ellipsoidal satellite $S$ orbits a massive body $F$ in a Kepler ellipse orbit. Therefore its true anomaly $f$ fulfils the equation
\begin{equation}\label{f}
f'=\frac{(1+e \cos f)^2}{(1-e^2)^{3/2}}.
\end{equation}
The equation (\ref{f}) has a symmetry: if $t\mapsto f(t)$ is a solution, then $t\mapsto -f(-t)$ is a solution. Also, solutions are strictly increasing.

The shortest axis of the satellite is perpendicular to the plane of the orbit. The inner rotation is expressed by the angle $\theta$ (see Fig. \ref{fig:satellite}) between the longest axis of $S$ and the long axis of the orbit. Then $\theta$ fulfils the second-order ordinary differential equation
\cite[Eq. 27.97]{Greiner}
\begin{equation}\label{theta}
\theta''=-\frac{\omega^2}{2r^3} \sin 2(\theta-f)
\text{,\qquad where }
\qquad
r=\frac{1-e^2}{1+e\cos f}.
\end{equation}
The parameter
\begin{equation}
\omega^2 = 3a^3\left(\frac{2\pi}{T}\right)^2\frac{\Theta_2-\Theta_1}{\Theta_3}\in [0,1]
\end{equation}
may be related to normalized oblateness of the satellite.


\subsection{The dynamical system}
In general, Eqs. (\ref{f}) and (\ref{theta}) induce a three-dimensional dynamical system
\begin{equation}\label{rot}
\begin{cases}
\theta'=\phi\\
\phi'=-\frac{\omega^2}{2r^3} \sin 2(\theta-f)\\
f'=\frac{(1+e \cos f)^2}{(1-e^2)^{3/2}}
\end{cases}
\end{equation}
with parameters $e$, $\omega^2$.
The inner rotation angle $\theta \in [0,\pi]$ and $f\in [0,2\pi]$, so the phase space for the system (\ref{rot}) is
$(\theta,\phi,f)\in \R_{/\pi\mathbb{Z}}\times \R \times \R_{/2\pi\mathbb{Z}}$.


\subsection{Poincar\'e map}

We study the Poincar\'e map $P$ of the system (\ref{rot}) on the 2-dim section $S:\{f=0\}$, \textit{i.e.} the map
\[
P(\theta,\phi) = \Phi\left(T(\theta,\phi), (\theta,\phi,f=0)\right)\text{,}
\]
where $\Phi$ is the dynamical system induced by (\ref{rot}) and $T=T(\theta,\phi)$ is a first recurrence time.
Note that the domain of so defined map is $\operatorname{Dom}_P = \R_{/\pi\mathbb{Z}}\times \R$, because $f$ is strictly increasing and of bounded variation.

The main fragment of the Poincar\'e section $S$ with twelve orbits marked in different colours is depicted on Fig. \ref{fig:P}.
	\begin{figure}[h]
	\includegraphics[height=7cm]{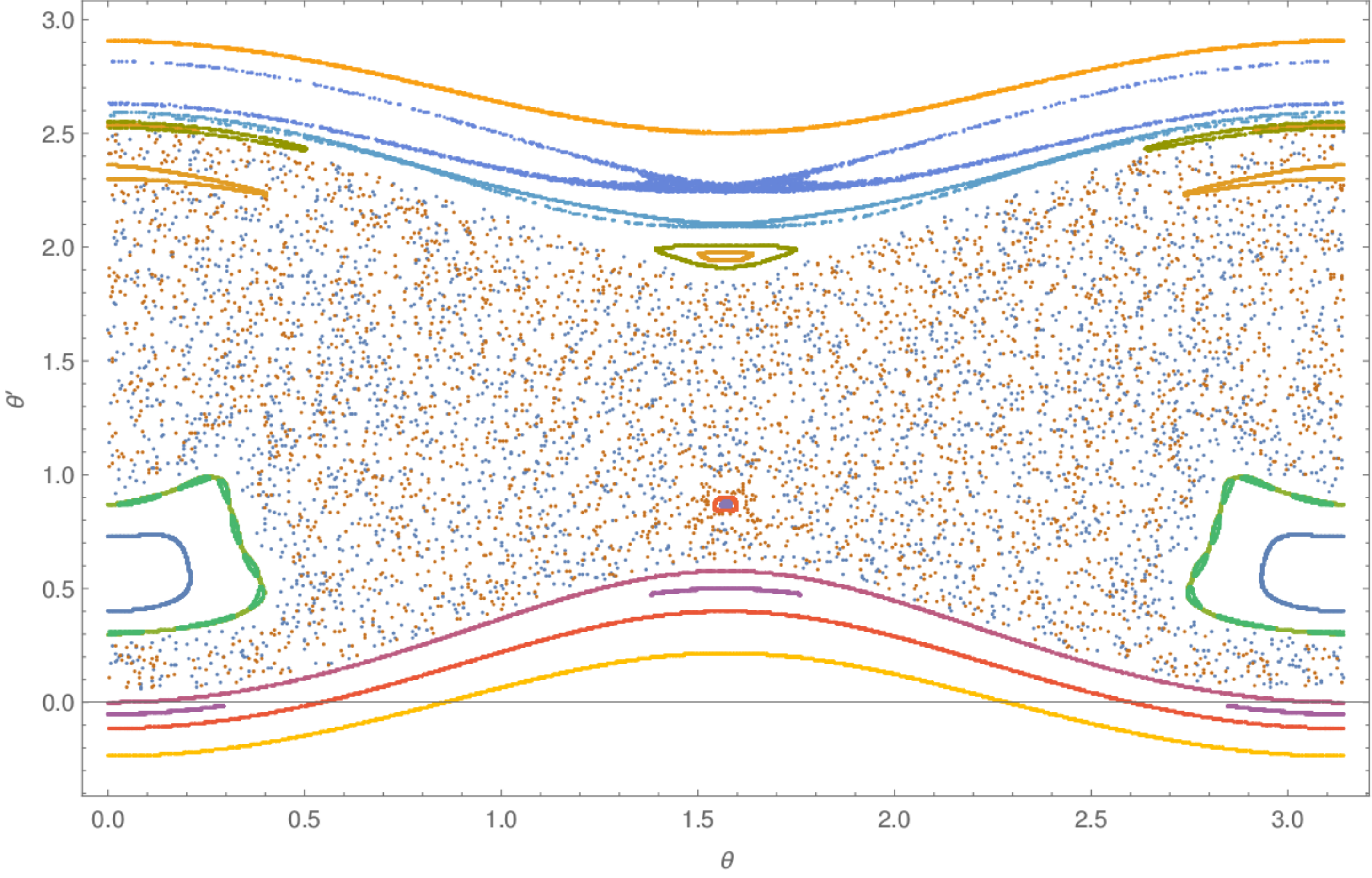}	
	\caption{\label{fig:P} Poincar\'e section $S: \{f=0\}$, $e=0.1$, $\omega^2=0.79$}
	\end{figure}

What can be immediately noticed is a probable symmetry of the map $P$, because the section seems to have reflection symmetries with respect to $\theta=0=\pi$ and $\theta=\frac{\pi}{2}$ lines. Indeed, setting $t\mapsto -t$ to the equations we notice that if $(\theta(t), \phi(t), f(t))$ is a solution, then so is $(-\theta(-t), \phi(-t), -f(-t))$ and consequently
\[
\pi_\theta P(\theta,\phi) = -\pi_\theta P^{-1}(-\theta,\phi)
\text{, \qquad }
\pi_\phi P(\theta,\phi) = \pi_\phi P^{-1}(-\theta,\phi).
\]
Then, using the periodicity of the phase space: $-\theta = 0-\theta = \pi-\theta$, we explain the two symmetries of the Poincar\'e section.

For further consideration, we denote the $\{\theta=\frac{\pi}{2}\}$-hyperplane reflectional time-reversing symmetry of the extended phase space by $R$, so
\[
R\left(\theta,\phi,f,t\right) = \left(\pi-\theta,\phi,-f,-t\right).
\]
We will also, if it is understandable, denote by $R$ its restriction to $S$: $R(\theta,\phi) = (\pi-\theta,\phi)$.

\subsection{Chaos in the system (\ref{rot})}
The other natural observation based on Fig. \ref{fig:P} is a large region of probable chaos for (more or less) $0 < \phi=\theta' < 2.0$, with some elliptic islands. The chaotic behaviour in terms of stability and tidal evolution was studied by Wisdom \textit{et al.} in \cite{W}. The Lyapunov Characteristic Exponents (LCE) occured to be positive, which was the reason to treat Hyperion's motion as chaotic in the subsequent literature. LCE of the system with wide range of $e$, $\omega^2$ were also explored in \cite{Tarnopolski2015}.


\section{Periodic orbits via topological covering}\label{sec:tools}

Topological tools that we used in detecting periodic orbits for (\ref{rot}) were introduced in details in \cite{WZ1,WZ2}. Here we recollect them shortly and present some intuition.

\subsection{H-sets}
The basic notion is
\begin{definition}[\cite{WZ1}, Def. 3.1]
\emph{An h-set} is a quadruple $N=(|N|, u(n), s(N), c_N)$, where $|N|$ is a compact subset of $\mathbb{R}^n$, which we will call \emph{a support of a h-set} (or simply an h-set) and
\begin{enumerate}
\item two numbers $u(N)$, $s(N) \in \mathbb{N}\cup\{0\}$ complement the dimension of space:
\[u(N) + s(N)=n\text{;}\]
we will call them the \emph{exit} and \emph{entry dimension}, respectively;
\item the homeomorphism $c_N : \mathbb{R}^n\to\mathbb{R}^n=\mathbb{R}^{u(N)}\times \mathbb{R}^{s(N)}$ is such that
\[
c_N(|N|)=\overline{\mathbb{B}_{u(N)}}\times\overline{\mathbb{B}_{s(N)}}\text{,}
\]
where $\overline{\mathbb{B}_{k}}$ denotes a closed unit ball of dimension $k$.
\end{enumerate}

\end{definition}
 We set also some useful notions:
\begin{align*}
\dim N &= n\text{,}\\
N_c &= \overline{\mathbb{B}_{u(N)}}\times\overline{\mathbb{B}_{s(N)}}\text{,}\\
N_c^- &= \partial\mathbb{B}_{u(N)}\times\overline{\mathbb{B}_{s(N)}}\text{,}\\
N_c^+ &= \overline{\mathbb{B}_{u(N)}}\times\partial\mathbb{B}_{s(N)}\text{,}\\
N^- &= c_N^{-1}(N_c^-)\text{,} \qquad N^+ = c_N^{-1}(N_c^+)\text{.}
\end{align*}
As one can notice, the notions with the subscript $_c$ refer to the `straight' coordinate system in the image of $c_N$. The last two sets $N^-$ and $N^+$ defined above are often called the \emph{exit set} and the
\emph{entrance set}, respectively.

Therefore, we can assume that an h-set is a product of two unitary balls moved to some coordinate system with the exit and entrance sets distinguished.

\subsection{Covering and back-covering}
We define the notion of topological covering:

\begin{definition}[\cite{WZ1}, Def. 3.4, simplified]
Let $f:|N|\to \mathbb{R}^n$ be a continuous map and two h-sets $M$, $N$ are such that
$u(M)=u(N)=u$ and $s(M)=s(N) = s$. Denote $f_c=c_N \circ f \circ c_M^{-1} : M_c \to \mathbb{R}^u\times \mathbb{R}^s$.
We say that that $M$ \emph{$f$-covers} the h-set $N$, if
\begin{enumerate}
\item there exists a continuous homotopy $h:[0,1]\times M_c \to \mathbb{R}^u\times \mathbb{R}^s$, such that:
\begin{align*}
h_0 &= f_c \text{,} &\\
h([0,1],M_c^-)\cap N_c &= \varnothing &\textit{(the exit condition),}\\
h([0,1],M_c)\cap N_c^+ &= \varnothing &\textit{(the entry condition).}\\
\end{align*}
\item If $u>0$, then there exists a linear map $A:\mathbb{R}^u \to \mathbb{R}^u$ such that
\begin{align*}
h_1(x,y) &= (A(x),0) \quad \text{ for } x\in\overline{\mathbb{B}_u} \text{ and } y\in\overline{\mathbb{B}_s} \text{,}\\
A(\partial \mathbb{B}_u) &\subset \mathbb{R}^u\setminus\mathbb{B}_u.
\end{align*}
\end{enumerate}
\end{definition}
If $M$ $f$-covers $N$, we simply denote it by $M \stackrel{f}{\Longrightarrow}N$. See Fig. \ref{fig:covering} for an illustration of covering in some low-dimensional cases.

\begin{figure}[h]
	\includegraphics[height=3cm]{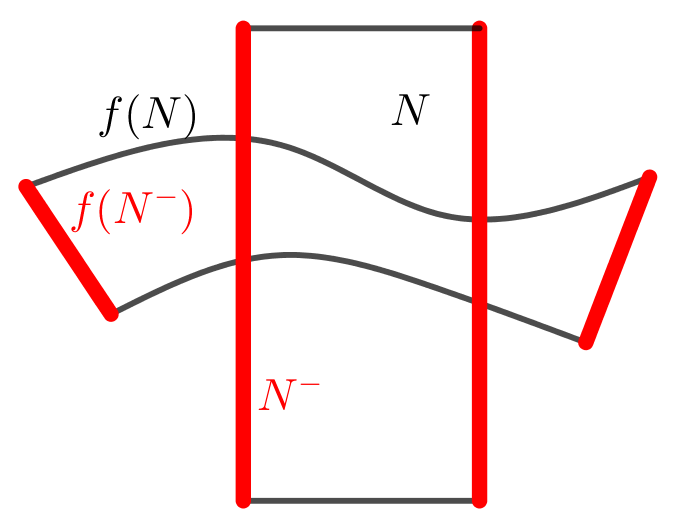}
	\hfil
	\includegraphics[height=3cm]{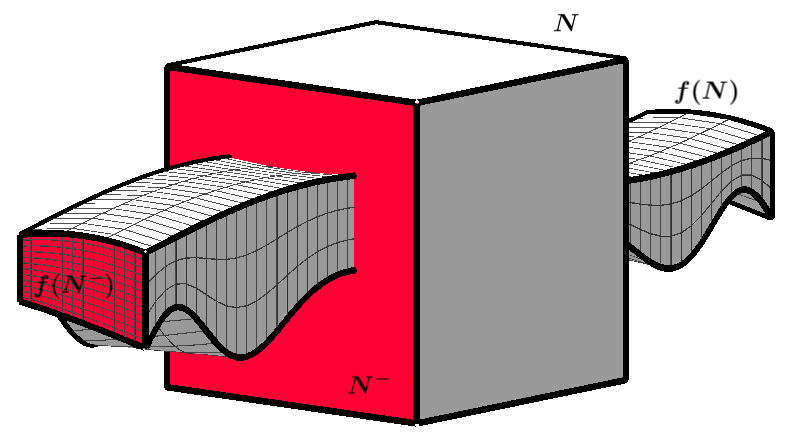}
	\hfil 	
	\includegraphics[height=3cm]{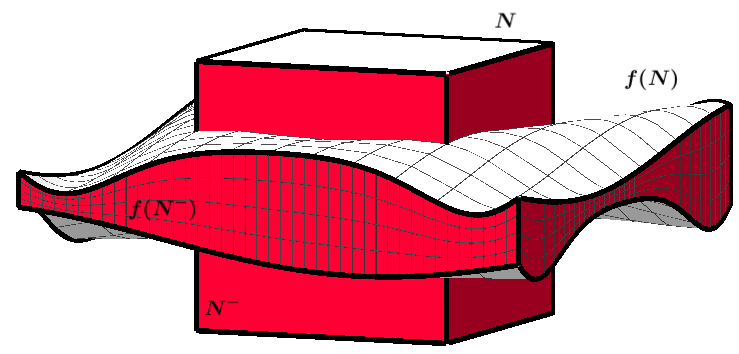}
	\caption{\label{fig:covering}Examples of topological self-covering $N \stackrel{f}{\Longrightarrow}N$ in $\mathbb{R}^2$ (left) and $\mathbb{R}^3$: with one exit direction (middle) and two exit directions (right). The exit sets and their images are marked in red.}
\end{figure}

Sometimes it is more convenient to use the backward covering, which one may understand as the covering backwards in time.

\begin{definition}
If $N$ is an h-set, then we define an h-set $N^T$ as
\begin{itemize}
	\item $|N^T| = |N|$;
	\item $u(N^T) = s(N)$ and $s(N^T) = u(N)$;
	\item $c_{N^T}:\mathbb{R}^n \ni x \longmapsto j(c_N(x)) \in  \mathbb{R}^{u(N^T)}\times\mathbb{R}^{s(N^T)}=\mathbb{R}^n$,\\
	where
	$j:\mathbb{R}^{u(N^T)}\times\mathbb{R}^{s(N^T)}\ni (p,q) \longmapsto (q,p) \in \mathbb{R}^{s(N^T)}\times\mathbb{R}^{u(N^T)}$.
\end{itemize}
\end{definition}
As we can see, the h-set $N^T$ is just the h-set $N$ with the entrance and exit sets swapped.
\begin{definition}
Let $M$, $N$ be two h-sets such that $u(M)=u(N)=u$ and $s(M)=s(N) = s$. Let $f:\operatorname{Dom}_f\subset\R^n \to \R^n$ be such that $f^{-1}:|N|\to \R^n$ is well-defined and continuous. Then we say that $M$ \emph{back-covers} $N$, and denote by $M\stackrel{f}{\Longleftarrow}N$, iff $N^T\stackrel{f^{-1}}{\Longrightarrow} M^T$.\\
If either $M\stackrel{f}{\Longleftarrow}N$ or $M\stackrel{f}{\Longrightarrow} N$, then we will write $M\stackrel{f}{\Longleftrightarrow} N$.
\end{definition}

In general, if $N_0 \stackrel{f}{\Longrightarrow}N_1$ and $N_1 \stackrel{f}{\Longrightarrow}N_2$, then not necessarily $N_0 \stackrel{f^2}{\Longrightarrow}N_2$, but covering has the property of tracking orbits. The basic application of topological covering is the following theorem, stating the existence of a periodic orbit related to a sequence of coverings.

\begin{theorem}[\cite{WZ1}, Theorem 3.6, simplified]\label{th:periodic}
Suppose there exists a sequence of h-sets $N_0$, \ldots $N_n=N_0$, such that
\[
N_0 \stackrel{f}{\Longleftrightarrow} N_1 \stackrel{f}{\Longleftrightarrow}\ldots \stackrel{f}{\Longleftrightarrow} N_n = N_0\text{,}
\]
then there exists a point $x\in \inte |N_0|$, such that $f^k(x) \in \inte|N_{k}|$ for $k=0,1,\ldots,n$ and $f^n(x)=x$.
\end{theorem}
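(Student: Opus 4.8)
The plan is to recast the existence of the tracking periodic orbit as a zero‑finding problem on the product of the straightened h‑sets, and to solve it by a Brouwer‑degree argument. Working in the coordinates supplied by the homeomorphisms $c_{N_k}$, set $g_k = c_{N_{k+1}} \circ f \circ c_{N_k}^{-1}$ for the forward links (and the analogous map built from $f^{-1}$ for the backward links), so that each $g_k$ is a map $N_{k,c} \to \R^u \times \R^s$ satisfying the covering conditions. A point $x$ with $f^k(x) \in \inte|N_k|$ and $f^n(x)=x$ corresponds exactly to a cyclic sequence $p_k \in \inte N_{k,c}$, $k=0,\ldots,n-1$ (with $p_n=p_0$), realizing $f\bigl(c_{N_k}^{-1}p_k\bigr)=c_{N_{k+1}}^{-1}p_{k+1}$ for every $k$. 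For the purely forward case this reads $g_k(p_k)=p_{k+1}$, so I would study the map
\[
T : N_{0,c} \times \cdots \times N_{n-1,c} \to (\R^u \times \R^s)^n,\qquad T(p_0,\ldots,p_{n-1}) = \bigl(g_0(p_0)-p_1,\;\ldots,\;g_{n-1}(p_{n-1})-p_0\bigr),
\]
and show that it has a zero in the interior of its domain; the point $c_{N_0}^{-1}(p_0)$ is then the required periodic point, and $N_n=N_0$ (so $c_{N_n}=c_{N_0}$) yields $f^n(x)=x$.

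The key step is to check that $T$ does not vanish on the boundary $\partial\bigl(\prod_k N_{k,c}\bigr)$, which is the union over $k$ of the sets where $p_k$ lies in the exit set $N_{k,c}^-$ or the entrance set $N_{k,c}^+$. This is exactly what the exit and entry conditions deliver. If $p_k \in N_{k,c}^-$, the exit condition of the covering out of $N_k$ keeps the image outside $N_{k+1,c}$, so the $k$-th block of $T$ is nonzero; if $p_k \in N_{k,c}^+$, the entry condition of the covering into $N_k$ keeps $g_{k-1}(p_{k-1})$ away from $p_k$, so the $(k-1)$-th block is nonzero. In either case $T \ne 0$ on the boundary, as the degree requires.

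To evaluate the degree I would use the homotopies $h^k$ furnished by the definition of covering, connecting $h^k_0=g_k$ to the linear model $h^k_1(x,y)=(A_k x,0)$. Substituting $h^k_t$ for each $g_k$ produces a homotopy $T_t$ with $T_0=T$ and a linear map $T_1$. Since the exit and entry conditions are stated for the whole homotopy, the boundary analysis above shows $T_t\ne 0$ on $\partial\bigl(\prod_k N_{k,c}\bigr)$ for all $t$, whence $\deg(T,\cdot,0)=\deg(T_1,\cdot,0)$. In straight coordinates $T_1$ splits: the $s$-dimensional entrance directions collectively contribute $-\mathrm{Id}$ (each block forces $y_{k+1}=0$), while the $u$-dimensional exit directions form the cyclic system $x_k\mapsto A_k x_k - x_{k+1}$. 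The strict expansion $|A_k x|>|x|$ for $x\ne 0$, which follows from $A_k(\partial\mathbb{B}_u)\subset\R^u\setminus\mathbb{B}_u$, forces $A_k x_k=x_{k+1}$ (cyclically) to have only the trivial solution, so $T_1$ is nonsingular and $\deg(T_1,\cdot,0)=\operatorname{sign}\det T_1=\pm1\ne0$. A nonzero degree yields a zero of $T$ in the interior, completing the argument.

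The main obstacle is the uniform treatment of the backward links. For a forward link the natural map is $f$ and the $u$-directions expand; for a backward link $N_k \stackrel{f}{\Longleftarrow} N_{k+1}$, by definition $N_{k+1}^T \stackrel{f^{-1}}{\Longrightarrow} N_k^T$, so the natural map is $f^{-1}$ and, since passing to $N^T$ interchanges exit and entrance sets, it is the $s$-directions that expand under $f^{-1}$. I would encode each block of $T$ with the correct map ($f$ or $f^{-1}$) and verify, case by case over the four forward/backward combinations of the two links adjacent to a given $N_k$, that the exit set $N_{k,c}^-$ and the entrance set $N_{k,c}^+$ are each protected by exactly one neighboring condition, so that the boundary non‑vanishing of $T_t$ persists. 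Tracking which coordinate block expands, and correspondingly which linear model (the map $(A_k x,0)$ or its transpose‑type analogue) enters $T_1$, is the delicate bookkeeping; once it is arranged consistently the degree computation above goes through unchanged, every block still contributing a nonsingular expanding factor. This is precisely the content established in \cite{WZ1}.
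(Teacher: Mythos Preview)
The paper does not supply its own proof of this theorem; it is quoted from \cite{WZ1} as a ready-made tool and no argument is given here. Your Brouwer-degree argument---straightening via the $c_{N_k}$, encoding the orbit as a zero of a product map $T$, using the exit/entry conditions to keep $T_t$ nonvanishing on the boundary, and reducing to the nonsingular linear model $(A_kx,0)$---is exactly the approach of the original reference, so there is nothing in the present paper to compare against and your sketch is faithful to the source.
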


In particular, if $N_0\stackrel{f}{\Longrightarrow} N_0$, then in $N_0$ we have a stationary point for the map $f$. Note also that if the map is a Poincar\'e map $P$, then a stationary point for $P$ or $P^k$ lies on a periodic orbit for the dynamical system.
		
\subsection{Periodic orbits in Hyperion's rotation}


Using Theorem \ref{th:periodic}, we find some stationary points for $P$, \textit{i.e.} periodic orbits for the system (\ref{rot}). Their existence is proven rigorously via the interval Newton method \cite{M,N} implemented in C++ language with the use of CAPD library \cite{CAPD}. Using this method, one can also estimate the eigenvalues of the derivative in the stationary point, so it is possible to prove rigorously whether the points are hyperbolic. Those periodic points are depicted on the Poincar\'e map $P$ on Fig. \ref{fig:PPoints}. Three of them, denoted as $P_1$ $P_2$ and $P_3$, will be important in further consideration.
\begin{figure}[h]
\includegraphics[height=7cm]{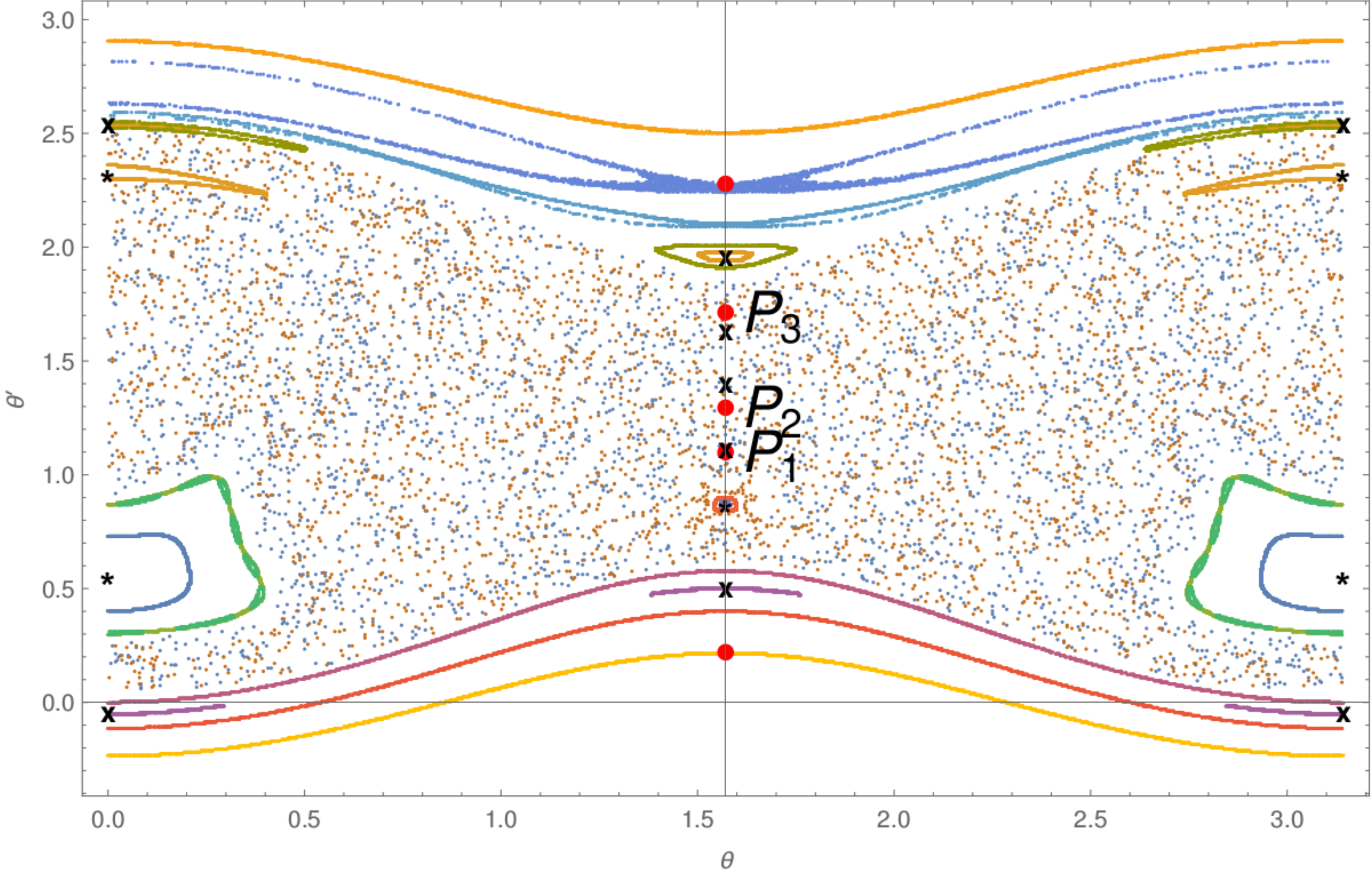}
\caption{\label{fig:PPoints} Periodic points of $P$, found via interval Newton method. The points marked by red dots are hyperbolic, black asterisks `$\star$' are elliptic. The points marked by black `{\bf\sf x}' are the stationary points for $P^2$. }
\end{figure}

The list of small intervals on the $\{\theta=\frac{\pi}{2}\}$ axis containing  $P_1$, $P_2$, $P_3$ is in the Table \ref{table:stationary_points}.
From now on, we will denote by $P_1$, $P_2$, $P_3$ the stationary points as well as the small sets containing them, described in this Table.

\begin{table}[h]
\begin{tabular}{|c|c|}
\hline
point & interval on $\{\theta=\frac{\pi}{2}\}$ \\
\hline\hline
$P_1\in$ & $[1.098956671156713, 1.098956671156731] = 1.0989566711567_{13}^{31}$
\\
\hline
$P_2\in$ & $[1.294511656257196, 1.294511656257254] = 1.294511656257_{196}^{254}$
\\
\hline
$P_3\in$ & $[1.712042516112098, 1.712042516112223] = 1.712042516112_{098}^{223}$
\\
\hline
\end{tabular}

\caption{\label{table:stationary_points}Localization of three stationary points of $P$, found via interval Newton method.}
\end{table}

The hyperbolic points $P_1$, $P_2$, $P_3$, presented on Fig. \ref{fig:PPoints}, can be also detected up to a small neighbourhood using covering relations. It is sufficient to find a self-covering compact set. This method, however, does not prove neither the uniqueness of the stationary point inside the set nor its hyperbolicity. The examples of the self-covering sets for $P_1$, $P_2$, $P_3$ are presented on Fig. \ref{fig:PiCov}.

\begin{figure}[h]
	\includegraphics[width=0.27\textwidth]{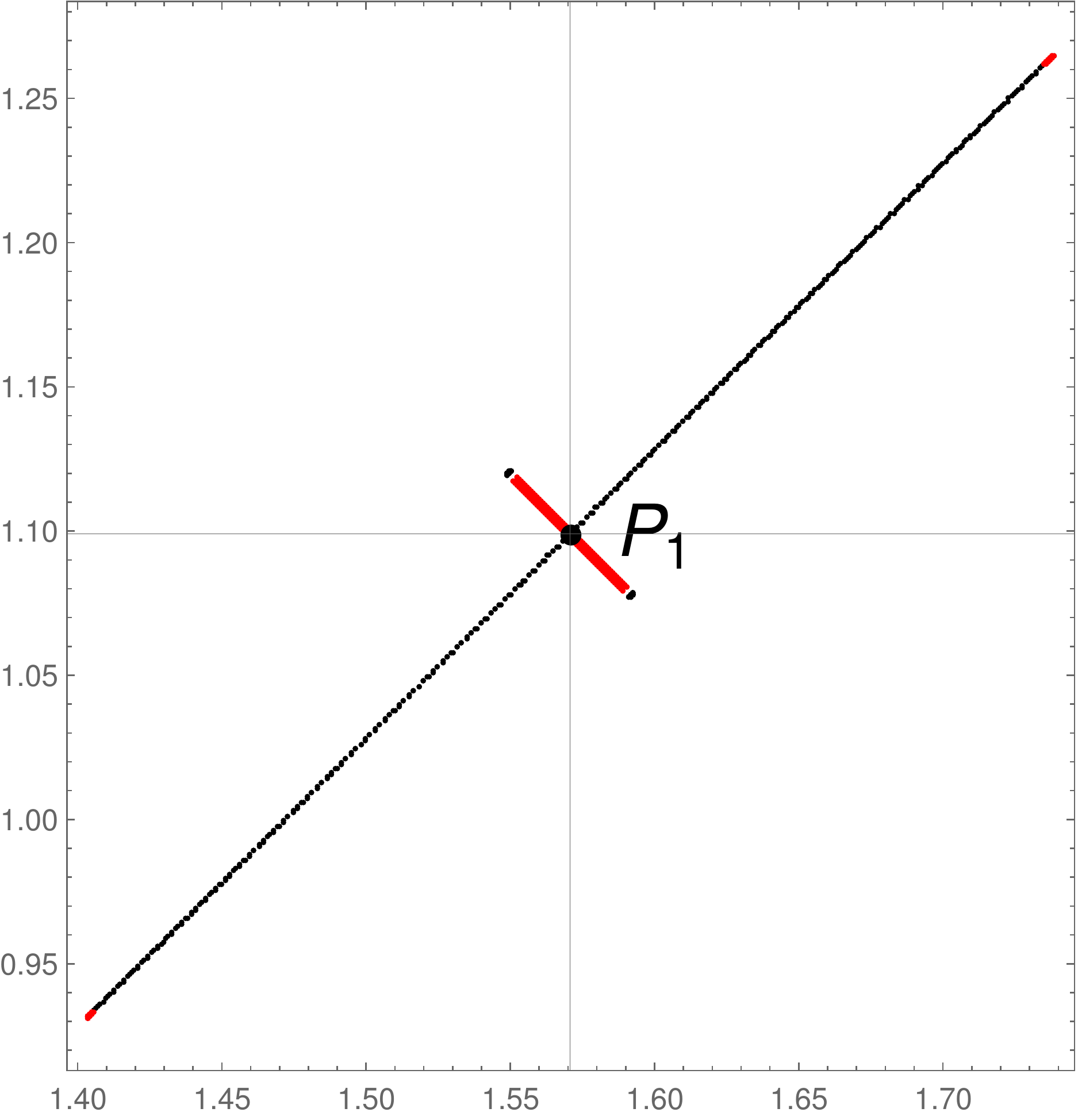}
	\qquad
	\includegraphics[width=0.27\textwidth]{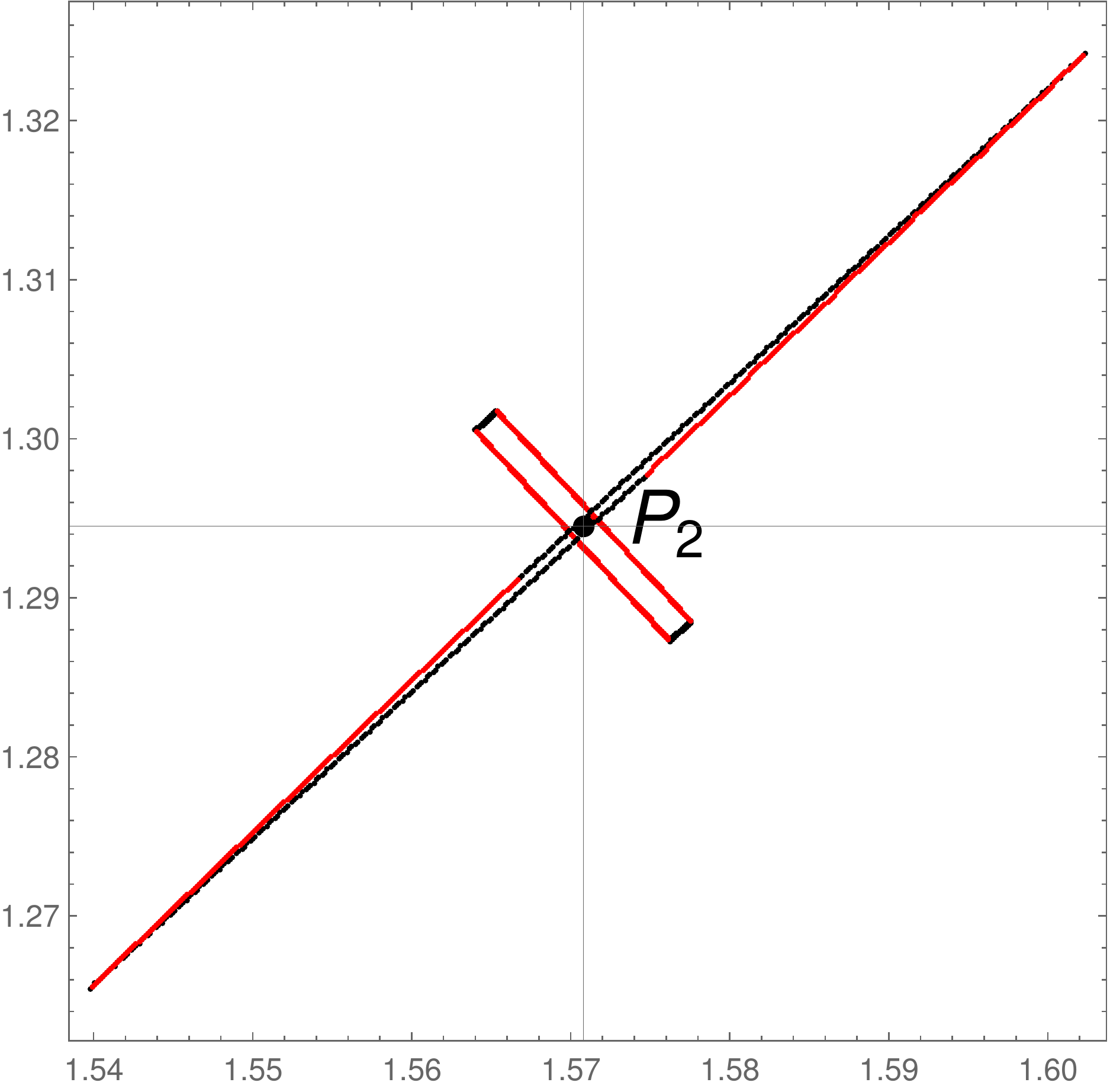}
	\qquad
	\includegraphics[width=0.27\textwidth]{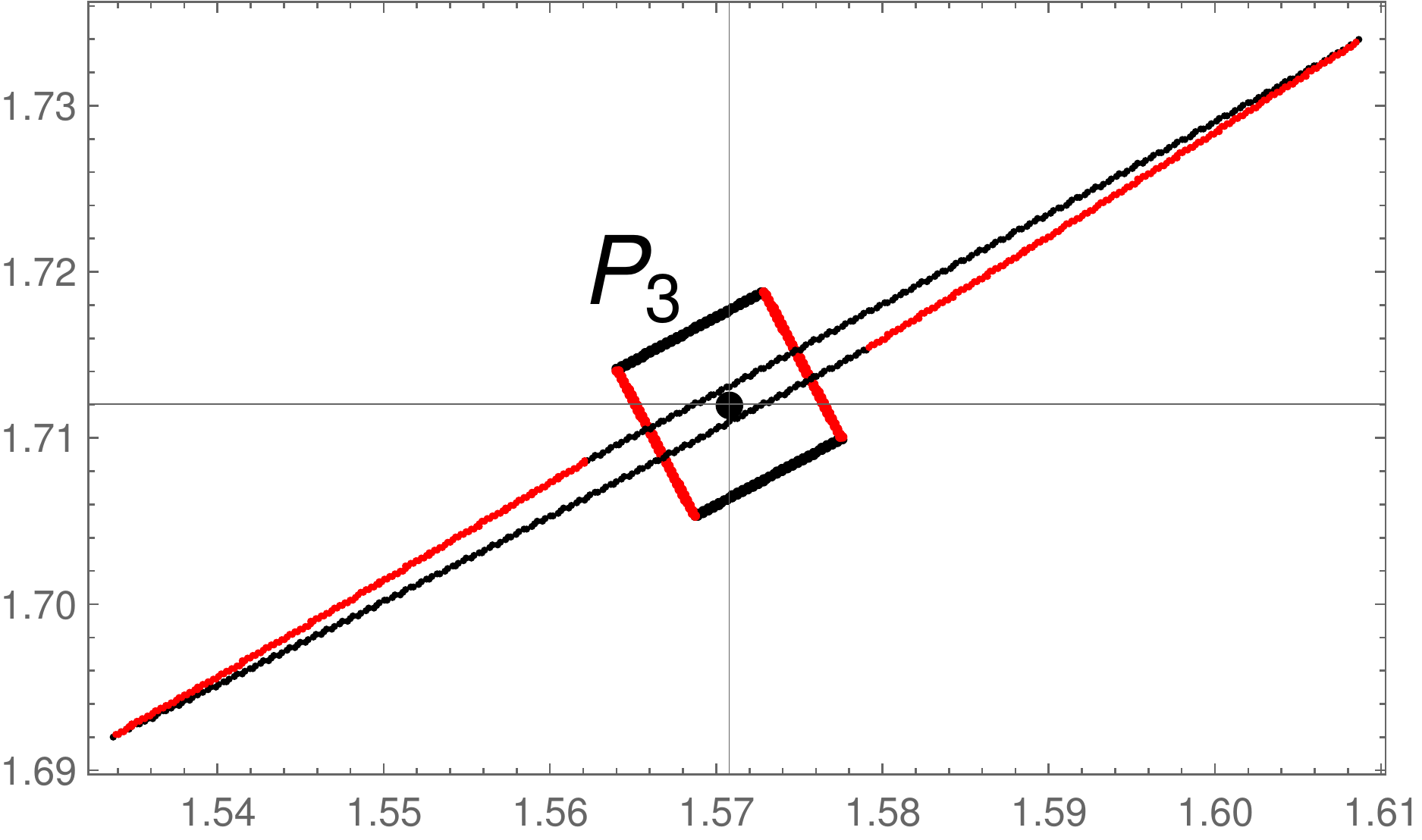}
	\caption{\label{fig:PiCov} Self-covering sets (the rectangles) proving existence of stationary points $P_1$, $P_2$, $P_3$. The exit sets and their images are marked in red.}
\end{figure}

\section{Symbolic dynamics detecting via covering}\label{sec:chaos}

For the next study we assume that the continuous map $f : \mathbb{R}^2 \to \mathbb{R}^2$ and all h-sets $N_i$ contained in $\mathbb{R}^2$ have entry and exit dimensions equal to 1, that is $s(N_i)=u(N_i)=1$.
Following the above assumptions the notions related to a h-set get simpler, because the balls are just the closed intervals: $\overline{\mathbb{B}_{u(N)}} = \overline{\mathbb{B}_{s(N)}}=[-1,1]$:
\begin{itemize}
\item $N_c = [-1,1]^2$,
\item $N_c^- = \{-1,1\}\times[-1,1]$,\quad $N_c^+ = [-1,1]\times\{-1,1\}$,
\item each $N^-$ and $N^+$ is topologically a sum of two disjoint intervals.
\end{itemize}

\subsection{Symbolic dynamics for a 2-dimensional map}

A model chaotic behaviour for our purposes is the shift map on the set of bi-infinite sequences of two symbols, that is, the space $\Sigma_2=\{0,1\}^{\mathbb{Z}}$ as a compact metric space with the metric
\[
\text{for }c=\{ c_n\}_{n\in\mathbb{Z}}\text{, }c'=\{ c'_n\}_{n\in\mathbb{Z}}\text{, }\qquad
\operatorname{dist}\left(c,c'\right) = \sum_{n=-\infty}^{+\infty} \frac{|c_n-c'_n|}{2^{|n|}}\text{,}
\]
which induces the product topology. The shift map $\sigma:\Sigma_2\to \Sigma_2$, given by
$$
 (\sigma (c))_n =c_{n+1}\text{,}
$$
is a homeomorphism of $\Sigma_2$ with well-known chaotic properties like the existence of a dense orbit, existence of orbit of any given period or that the set of periodic orbits is dense in the whole space. 

In our study, by the chaotic behaviour of a dynamical system we understand the existence of a compact set $I$ invariant for the Poincar\'e map $P$ (or sometimes its higher iteration) and a continuous surjection $g:I\to \Sigma_2$ such that $P|_I$ is semi-conjugated to $\sigma$, that is:
\[
g\circ P|_I = \sigma \circ g.
\]
Then one may say that $P$ admits on $I$ at least as rich dynamics as $\sigma$ on $\Sigma_2$.
The system $(\Sigma_2,\sigma)$ or any system (semi-)conjugated to it is sometimes described in literature as \emph{symbolic dynamics}.

\subsection{Topological horseshoe}

A simple example of symbolic dynamics semi-conjugated to $\sigma$ is a horseshoe:

\begin{definition}
Let $N_0$, $N_1 \subset \mathbb{R}^2$ be two disjoint h-sets. We say that a continuous map $f:\mathbb{R}^2 \to \mathbb{R}^2$ is a \emph{topological horseshoe} for $N_0$, $N_1$ if (see Fig. \ref{fig:horseshoe})
\begin{equation}
\begin{array}{cc}
N_0 \overset{f}{\Longrightarrow}N_0\text{,} \quad
&N_0 \overset{f}{\Longrightarrow}N_1\text{,}
\\
N_1 \overset{f}{\Longrightarrow}N_0 \text{,} \quad
&N_1 \overset{f}{\Longrightarrow}N_1.
\end{array}
\end{equation}
\end{definition}

\begin{figure}[h]
\begin{minipage}{0.45\textwidth}
\begin{equation*}
\begin{array}{cc}
N_0 \overset{f}{\Longrightarrow}N_0\text{,} \quad
&N_0 \overset{f}{\Longrightarrow}N_1\text{,}
\\
N_1 \overset{f}{\Longrightarrow}N_0 \text{,} \quad
&N_1 \overset{f}{\Longrightarrow}N_1.
\end{array}
\end{equation*}
\end{minipage}
\begin{minipage}{0.45\textwidth}
\begin{center}
	\includegraphics[height=4cm]{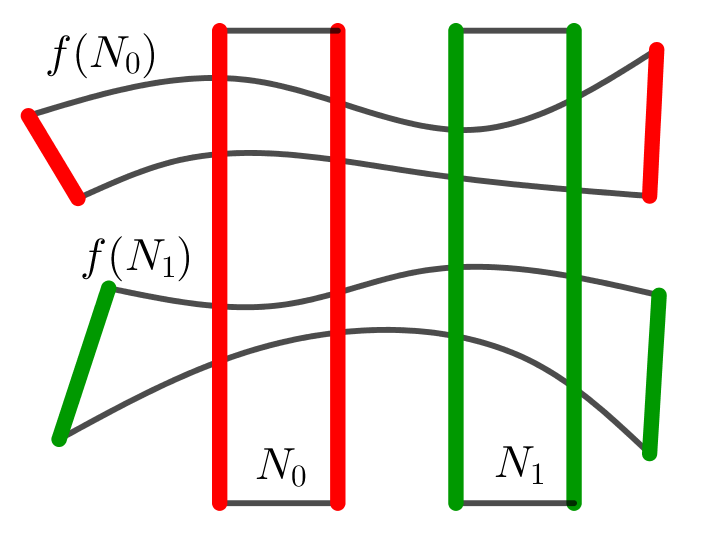}
\end{center}
\end{minipage}
	\caption{\label{fig:horseshoe}A topological horseshoe: each $N_{0,1}$ covers itself and the other set. The exit sets of $N_0$ and $N_1$ are marked in red and green, respectively.}
\end{figure}

It can be shown that for any topological horseshoe we obtain symbolic dynamics.

\begin{theorem}[\cite{GZ}, Theorem 18]
Let $f$ be a topological horseshoe for $N_0$, $N_1$. Denote by $I = \operatorname{Inv}(N_0\cup N_1)$ the invariant part of the set $N_0\cup N_1$ under $f$, and define a map $g: I \to \Sigma_2$ by
\[
g(x)_k = j \in \{0,1\} \quad \text{ iff } \quad f^k(x)\in N_j.
\]
Then $g$ is a surjection satisfying $g\circ f|_I = \sigma \circ g$ and therefore $f$ is semi-conjugated to the shift map $\sigma$ on $\Sigma_2$.
\end{theorem}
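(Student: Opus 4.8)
The plan is to verify in turn that $g$ is well-defined, that it intertwines $f$ with the shift, that it is continuous, and finally---the substantive part---that it is surjective. Well-definedness is immediate: since $N_0$ and $N_1$ are disjoint and every point of the orbit of $x\in I$ lies in $N_0\cup N_1$, each $f^k(x)$ belongs to exactly one of the two sets, so the itinerary $g(x)=(g(x)_k)_{k\in\mathbb{Z}}$ is a uniquely determined element of $\Sigma_2$. The conjugacy relation is purely formal: $g(f(x))_k=j$ holds iff $f^{k}(f(x))=f^{k+1}(x)\in N_j$, which is exactly the condition $g(x)_{k+1}=j=(\sigma g(x))_k$; hence $g\circ f|_I=\sigma\circ g$.

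For continuity I would use that $N_0$ and $N_1$ are compact and disjoint, hence separated by a positive distance $\delta$, and that $I$ is compact and invariant. To force $\operatorname{dist}(g(x),g(y))$ below a prescribed threshold it suffices to make the itineraries of $x$ and $y$ agree on a window $|k|\le M$. Because $f$ is a homeomorphism (in our application the Poincar\'e map $P$), each iterate $f^k$ with $|k|\le M$ is uniformly continuous on the compact set $I$; choosing $x,y\in I$ close enough that $\|f^k(x)-f^k(y)\|<\delta$ for all $|k|\le M$ guarantees that $f^k(x)$ and $f^k(y)$ lie in the same component $N_0$ or $N_1$, so the symbols match on the whole window. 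This yields continuity in the product metric on $\Sigma_2$.

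The core of the argument is surjectivity, and here I would invoke the orbit-tracking property of covering relations that also underlies Theorem \ref{th:periodic}. The horseshoe hypothesis provides all four single-step coverings $N_i\overset{f}{\Longrightarrow}N_j$, so for any prescribed finite word $(c_{-n},\dots,c_n)$ the chain $N_{c_{-n}}\overset{f}{\Longrightarrow}\cdots\overset{f}{\Longrightarrow}N_{c_n}$ is valid. The fundamental existence lemma for such chains (the non-periodic analogue of Theorem \ref{th:periodic}, proved in \cite{WZ1} via a local-degree / fixed-point-index computation) then produces a point $x_n$ with $f^{k}(x_n)\in N_{c_k}$ for every $-n\le k\le n$; in particular every $x_n$ lies in the compact set $N_{c_0}$. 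I would then pass to a limit: extracting a convergent subsequence $x_{n_j}\to x$, fixing any index $k$ and using $n_j\ge|k|$ together with continuity of $f^k$ and the closedness of $N_{c_k}$, I obtain $f^k(x)=\lim_j f^k(x_{n_j})\in N_{c_k}$. Since this holds for every $k\in\mathbb{Z}$, the whole orbit of $x$ stays in $N_0\cup N_1$, so $x\in I$ and $g(x)=c$.

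The main obstacle is exactly the finite-chain existence lemma: turning the purely topological covering data into an actual orbit segment threading all the prescribed boxes. This is where the real content of \cite{WZ1,GZ} lies---the homotopy together with the exit and entry conditions in the definition of covering are arranged precisely so that the relevant map has nonzero topological degree, forcing a solution---whereas the subsequent compactness limit and the verification of continuity and of the intertwining relation are routine once that lemma is in hand.
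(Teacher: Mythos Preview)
The paper does not supply a proof of this statement at all: it is quoted as Theorem~18 of \cite{GZ} and used as a black box, so there is nothing in the paper to compare your argument against.

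Your outline is the standard one and is correct in substance. Two small remarks. First, the theorem as phrased in the paper only assumes $f$ continuous, while your continuity argument and your limiting step for negative iterates need $f^{-1}$ to exist and be continuous; you flag this yourself, and it is harmless since the bi-infinite coding and the set $I=\operatorname{Inv}(N_0\cup N_1)$ already presuppose invertibility, and in the application $f$ is a Poincar\'e map. Second, for surjectivity you can avoid invoking a separate non-periodic tracking lemma: given $c\in\Sigma_2$, extend each finite block $(c_{-n},\dots,c_n)$ periodically and apply Theorem~\ref{th:periodic} directly to the resulting closed loop of coverings to obtain a \emph{periodic} point $x_n\in I$ with the prescribed itinerary on $[-n,n]$, then extract a convergent subsequence as you do. This keeps the whole argument within the statements the paper actually provides.
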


\begin{corollary}
Let $f$ be a topological horseshoe for $N_0$, $N_1$. Then it follows from Theorem \ref{th:periodic} that for any finite sequence of zeros and ones $(a_0, a_1,\ldots, a_{n-1})$, $a_i\in\{0,1\}$, there exists $x\in N_{a_0}$ such that
\[
f^i(x)\in\inte N_{a_i}
\qquad \text { and } \qquad
f^n(x)=x.
\]
\end{corollary}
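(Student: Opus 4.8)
The plan is to apply Theorem \ref{th:periodic} directly to a cyclic sequence of h-sets read off from the prescribed itinerary. First I would fix the finite word $(a_0, a_1, \ldots, a_{n-1})$ with $a_i \in \{0,1\}$ and build from it the closed chain of h-sets $M_0, M_1, \ldots, M_n$ by setting $M_k = N_{a_k}$ for $k = 0, 1, \ldots, n-1$ and closing the loop with $M_n := N_{a_0} = M_0$. Each $M_k$ is simply one of the two horseshoe sets $N_0$ or $N_1$, and the same set may well recur at several positions; this causes no trouble, since the hypotheses of Theorem \ref{th:periodic} concern only consecutive covering relations and the closing condition $N_n = N_0$, and impose no distinctness on the members of the sequence.

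The key observation is that the horseshoe hypothesis supplies exactly the coverings needed to chain this sequence together, whatever the word. Indeed, by the definition of a topological horseshoe all four relations $N_i \stackrel{f}{\Longrightarrow} N_j$, $i,j \in \{0,1\}$, hold; hence, reading indices cyclically with $a_n := a_0$, every consecutive pair satisfies $M_k = N_{a_k} \stackrel{f}{\Longrightarrow} N_{a_{k+1}} = M_{k+1}$. Since a forward covering is a special case of the relation $\stackrel{f}{\Longleftrightarrow}$ (which holds as soon as either $\stackrel{f}{\Longrightarrow}$ or $\stackrel{f}{\Longleftarrow}$ does), I obtain the admissible chain
\[
M_0 \stackrel{f}{\Longleftrightarrow} M_1 \stackrel{f}{\Longleftrightarrow} \cdots \stackrel{f}{\Longleftrightarrow} M_n = M_0.
\]

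I would then invoke Theorem \ref{th:periodic} on this chain. It yields a point $x \in \inte|M_0| = \inte N_{a_0}$ with $f^k(x) \in \inte|M_k| = \inte N_{a_k}$ for $k = 0, 1, \ldots, n$ and $f^n(x) = x$. Restricting to $k = 0, 1, \ldots, n-1$ gives $f^i(x) \in \inte N_{a_i}$, while the closing index delivers $f^n(x) = x$, which is precisely the assertion of the corollary (in fact slightly stronger, since $x$ is obtained in the interior of $N_{a_0}$).

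There is no analytic obstacle here: all the substance is already packaged inside Theorem \ref{th:periodic}, and the corollary is essentially a bookkeeping statement that translates an arbitrary symbolic word into an admissible cyclic covering chain. The only point deserving care is to confirm that Theorem \ref{th:periodic} tolerates repeated h-sets along the sequence, which it does for the reason noted above. Thus the expected difficulty is purely notational, namely keeping the two-element alphabet $\{0,1\}$ distinct from the position index $k$ running over the chain, and this is resolved cleanly by the auxiliary labelling $M_k = N_{a_k}$.
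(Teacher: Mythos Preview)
Your proposal is correct and matches the paper's approach exactly: the paper does not even write out a proof, as the corollary's statement already contains the justification (``it follows from Theorem~\ref{th:periodic}''), and your argument is precisely the intended unpacking of that remark---form the cyclic chain $N_{a_0}\stackrel{f}{\Longrightarrow}\cdots\stackrel{f}{\Longrightarrow}N_{a_{n-1}}\stackrel{f}{\Longrightarrow}N_{a_0}$ using the four horseshoe coverings and apply Theorem~\ref{th:periodic}.
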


\begin{remark}\label{rem:chain}
Note also that the following two chains of coverings or back-coverings:
\begin{gather*}
N_0 \overset{f}{\Longleftrightarrow} N_0 \overset{f}{\Longleftrightarrow} N_1 \overset{f}{\Longleftrightarrow} \dots \overset{f}{\Longleftrightarrow} N_k = M_0\text{,}
\\
M_0 \overset{f}{\Longleftrightarrow} M_0 \overset{f}{\Longleftrightarrow} M_1 \overset{f}{\Longleftrightarrow} \dots \overset{f}{\Longleftrightarrow} M_k = N_0
\end{gather*}
indicate symbolic dynamics for $f^k$ in the sets $N_0$ and $M_0$, with use of Theorem \ref{th:periodic}.
\end{remark}

\section{Symbolic dynamics in Hyperion's rotation system}\label{sec:results}

Numerical calculations suggest that the three hyperbolic points $P_1$, $P_2$ and $P_3$ detected above (see Table \ref{table:stationary_points}) have an interesting property: their stable and unstable manifolds intersect (for the same point and also pairwise), see Fig. \ref{fig:SUman}. The intersections seem to be transversal, which is a clue for searching for the symbolic dynamics in the Poincar\'e map $P$.

\begin{figure}[h]
	\includegraphics[height=6cm]{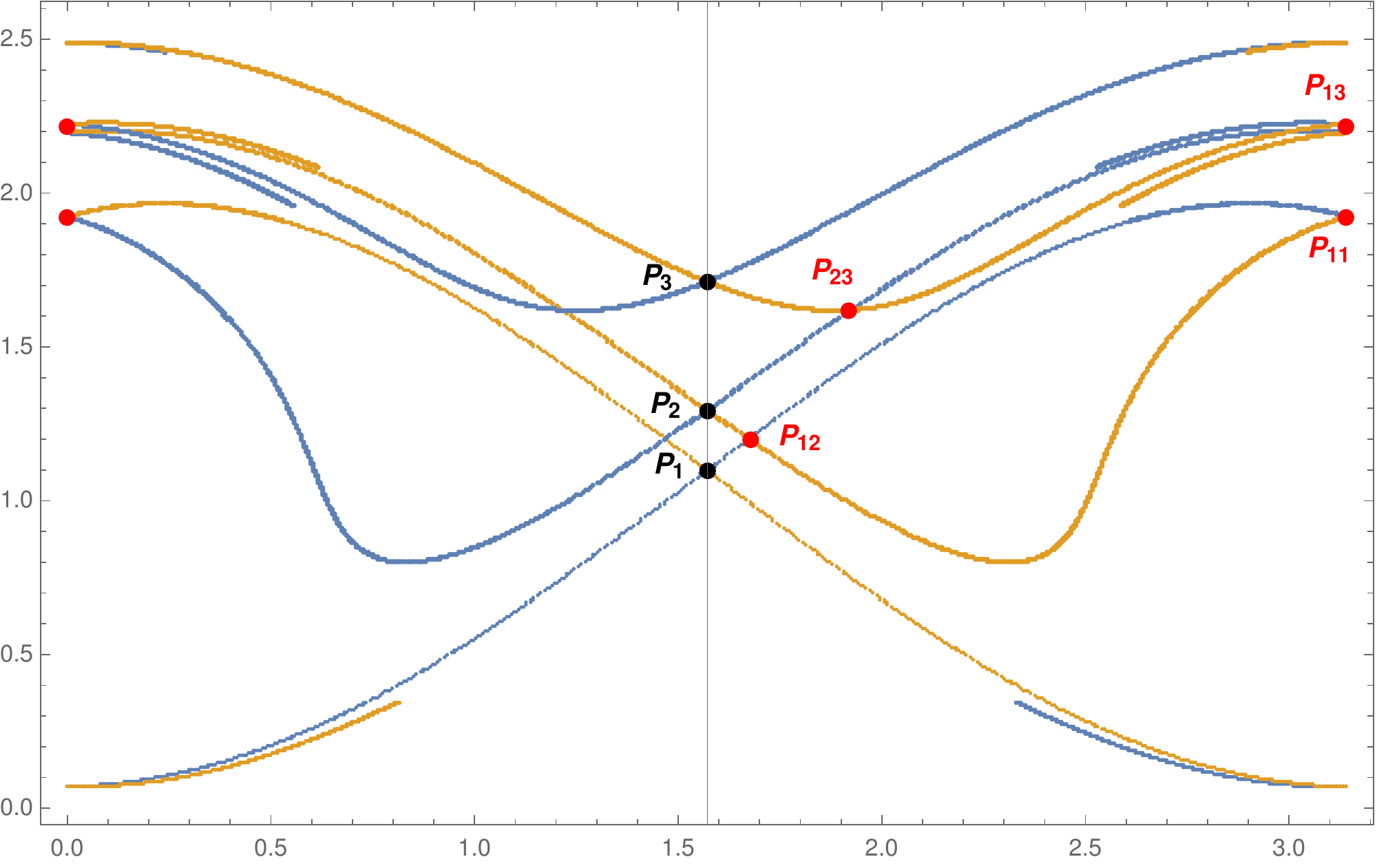}
	\caption{\label{fig:SUman}Fragments of stable (orange) and unstable (blue) manifolds of stationary points $P_1$, $P_2$, $P_3$. Some points of intersections of the manifolds are marked in red, namely $P_{11}$, $P_{12}$, $P_{13}$, $P_{23}$.}
\end{figure}

We found six topological horseshoes related to the intersections mentioned above. Below we present the h-sets and their covering relations. Similarly as in \cite{WZ1}, the h-sets are paralellograms of the form $N = p + A \cdot b$, which are the base cubes $b$ transformed to some affine coordinate system, where:
\begin{itemize}
\item $p$ are some small interval vectors containing base points (such as $P_1$, $P_2$ and $P_3$);
\item interval matrices $A$ are usually one of the eigenvectors matrices:
\begin{align*}
M_1 &= \begin{bmatrix}
0.70695646939_{59338}^{77127} & 0.70695646939_{59345}^{77133} \\ 0.7072570610_{281695}^{335063} & -0.7072570610_{335054}^{281689}
\end{bmatrix}
\text{,}
\\
M_2 &=\begin{bmatrix}
0.7344289378_{330728}^{407212} & 0.7344289378_{330732}^{407215}\\
0.6786855938_{153962}^{378076} & -0.6786855938_{378071}^{153957}
\end{bmatrix}
\text{,}
\\
M_3 &=\begin{bmatrix}
0.8837175133_{17777}^{293314} & 0.8837175133_{177765}^{293309}\\
0.4680206797_{026127}^{366671} & -0.4680206797_{366677}^{026135}
\end{bmatrix}
\text{,}
\end{align*}
or, in the case of connecting $P_3$ to itself, some matrices corrected to point directions of the dynamics on the stable or unstable manifolds.
\end{itemize}

\subsection{$P_1$ and $P_2$}

The simplest situation occurs in the case of the points $P_1$ and $P_2$, because we found a direct topological horseshoe for the first iteration of the Poincar\'e map $P$ (see Fig. \ref{fig:p12}).

\begin{figure}[h]
	\includegraphics[height=7cm]{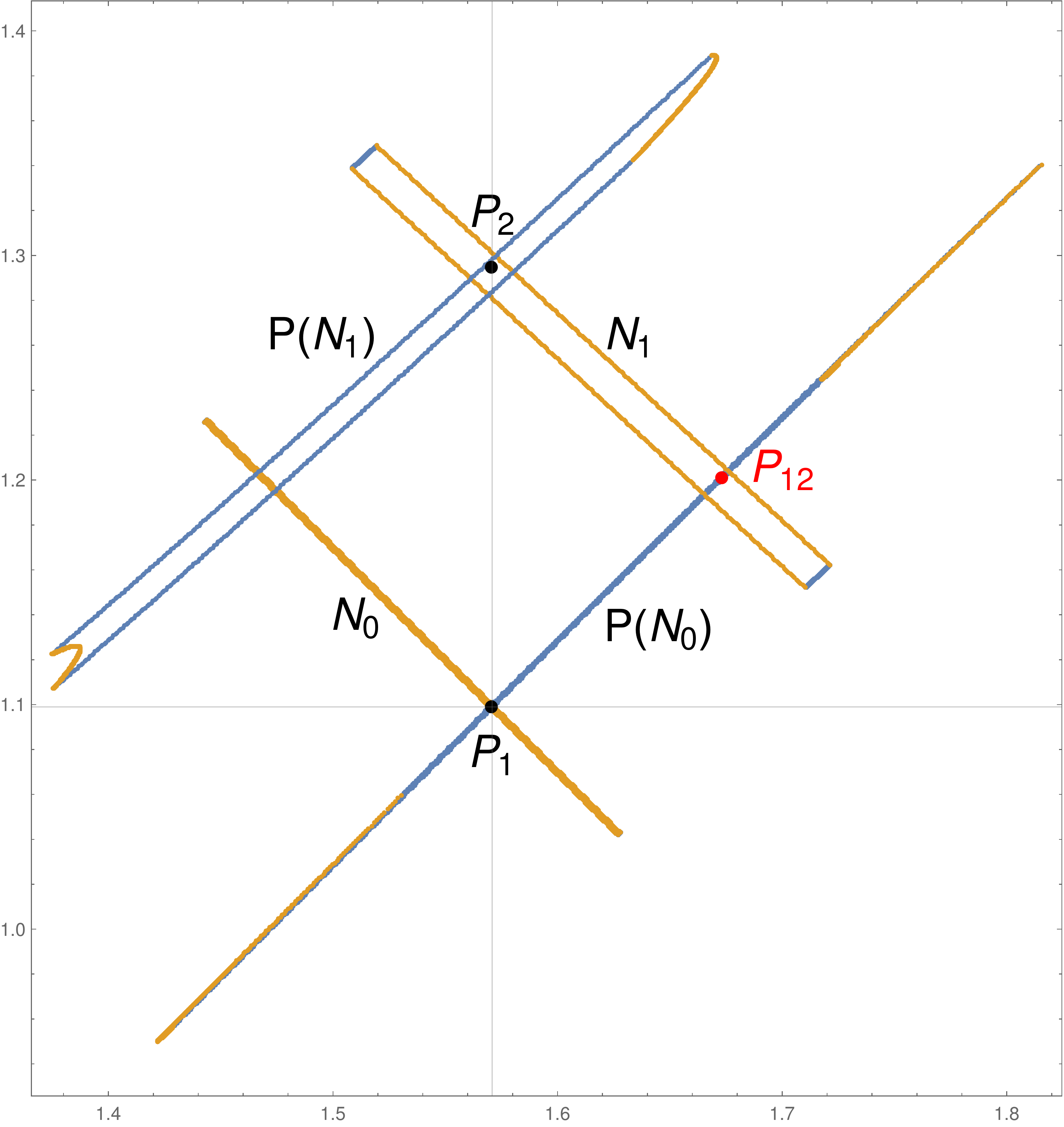}
	\caption{\label{fig:p12}The horseshoe proving symbolic dynamics for $P$, connecting the points $P_1$ and $P_2$. The exit sets of h-sets and their images are marked in orange.}
\end{figure}

\begin{theorem}
Let $N_0$ and $N_1$ be h-sets of the form $p + A \cdot b\cdot10^{-3}$, where:

\begin{center}
\begin{tabular}{|c||c|c|c|}
\hline
& $p$ & $A$ & $b$ \\
\hline
\hline
$N_0$ & $P_1$ & $M_1$ & $[-0.8,0.8]\times[-180,80]$\\
\hline
$N_1$ &$P_2$ & $M_2$ & $[-10,5]\times[-75,200]$\\
\hline
\end{tabular}
\end{center}


Then the following chain of covering relations occur:
\[
N_0 \overset{P}{\Longrightarrow} N_0 \overset{P}{\Longrightarrow} N_1 \overset{P}{\Longrightarrow} N_1 \overset{P}{\Longrightarrow} N_0\text{,}
\]
which proves the existence of symbolic dynamics for $P$.
\end{theorem}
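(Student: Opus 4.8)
The plan is to verify the four covering relations $N_0 \overset{P}{\Longrightarrow} N_0$, $N_0 \overset{P}{\Longrightarrow} N_1$, $N_1 \overset{P}{\Longrightarrow} N_1$, and $N_1 \overset{P}{\Longrightarrow} N_0$ rigorously, and then invoke Remark \ref{rem:chain} (with the chain read as a horseshoe-type loop) to conclude the existence of symbolic dynamics. Each covering is a purely local, checkable condition on the images of the parallelograms $N_0$, $N_1$ under the Poincar\'e map $P$, so the whole argument reduces to four computer-assisted evaluations of $P$ on explicit sets.

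First I would set up the rigorous integrator. The Poincar\'e map $P$ is the first-return map of the flow of system (\ref{rot}) to the section $\{f=0\}$; using the CAPD C++ library I would enclose the flow over one recurrence time in interval arithmetic, obtaining for each h-set a rigorous parallelogram (or, more precisely, an interval enclosure of $P(|N_i|)$ expressed in the affine coordinates $c_{N_i}$). Because each $N_i$ has $u(N_i)=s(N_i)=1$, the straightened images $P_c(|N_i|)$ are sets in $[-1,1]^2$-coordinates whose two ``vertical'' edges $N_{i,c}^-$ map across the target, while the ``horizontal'' edges $N_{i,c}^+$ stay clear of it. Concretely, to check $N_i \overset{P}{\Longrightarrow} N_j$ I would verify the \emph{exit condition} — that the $u$-coordinate of $P_c(N_{i,c}^-)$ lies strictly outside $[-1,1]$, with the two pieces of the exit set sent to opposite sides — and the \emph{entry condition} — that the $s$-coordinate of $P_c(|N_i|)$ never reaches $\pm 1$, so the image never touches $N_{j,c}^+$. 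The required homotopy and the expanding linear map $A$ in the covering definition are then standard once these inequalities hold, since the images are correctly oriented parallelogram-like sets stretched along the exit direction.

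The choice of coordinates is what makes this tractable, and it is worth stating explicitly. The matrices $M_1$ and $M_2$ are (interval enclosures of) the eigenvector matrices of $DP$ at $P_1$ and $P_2$, so in the $c_{N_i}$-coordinates the linearization of $P$ is approximately diagonal with one expanding and one contracting direction — this is exactly the alignment needed for the exit direction of $N_i$ to map across the target and for the entry direction to be contracted. The widths in $b$ (the factors such as $[-0.8,0.8]$, $[-180,80]$, scaled by $10^{-3}$) are tuned so that the contracted image fits inside the target's $s$-extent while the expanded image overshoots it; I would treat finding these widths as part of the numerical preparation rather than the proof itself.

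The main obstacle I expect is controlling the overestimation (the wrapping effect) of the rigorous integrator over the full return time, since the flow stretches $N_i$ strongly along the unstable direction and the resulting interval enclosures of $P(|N_i|)$ can blow up and spuriously violate the entry condition. To manage this I would integrate the two exit edges $N_{i,c}^-$ as thin one-dimensional sets (where the expansion is harmless for the exit condition) separately from the bulk set used for the entry condition, subdivide $|N_i|$ into a grid of smaller boxes and integrate each piece independently, and work in the eigencoordinates so that the dominant expansion is aligned with a single axis. Once the four inequalities are verified on every subbox, the four coverings hold, the loop $N_0 \Rightarrow N_0 \Rightarrow N_1 \Rightarrow N_1 \Rightarrow N_0$ furnishes (via Theorem \ref{th:periodic} and Remark \ref{rem:chain}) the full set of admissible itineraries, and the semi-conjugacy onto $(\Sigma_2,\sigma)$ follows.
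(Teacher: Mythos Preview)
Your proposal is correct and matches the paper's approach: the paper's own proof is simply ``Computer-assisted, \cite{proof}'', i.e., a rigorous CAPD verification of exactly the exit/entry inequalities you describe for each of the four coverings. Your write-up is in fact more detailed than the paper's, but the method is the same.
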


\begin{proof}
Computer-assisted, \cite{proof}.
\end{proof}

\subsection{$P_1$ and $P_1$, $P_2$ and $P_2$}

To prove symbolic dynamics between $P_1$ and $P_1$ or between $P_2$ and $P_2$ with our simple tools, we need the second iteration of the Poincar\'e map $P$. Fig. \ref{fig:p11} illustrates the situation.

\begin{figure}[h]
	\includegraphics[height=7cm]{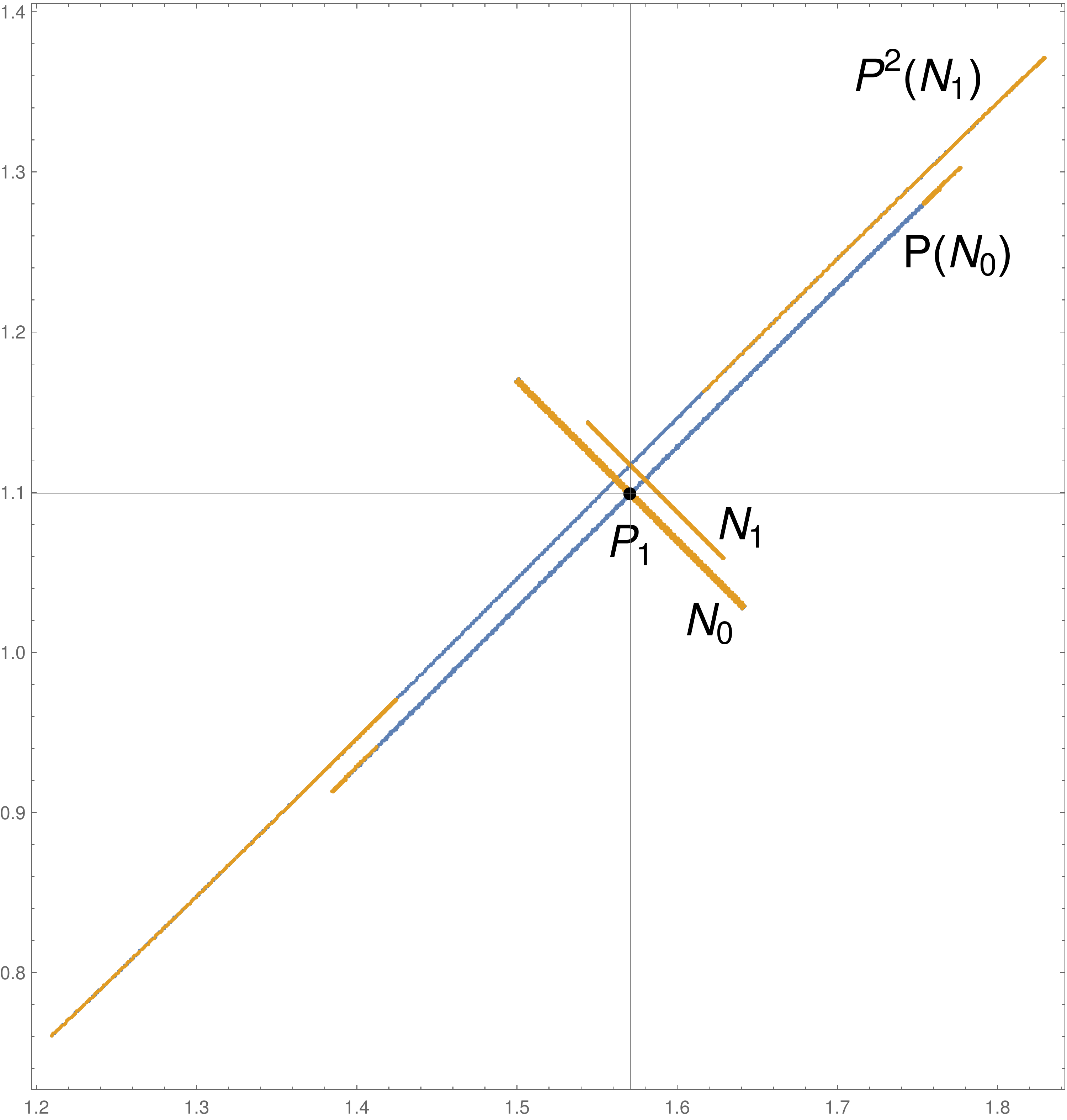}\hfil
	\includegraphics[height=7cm]{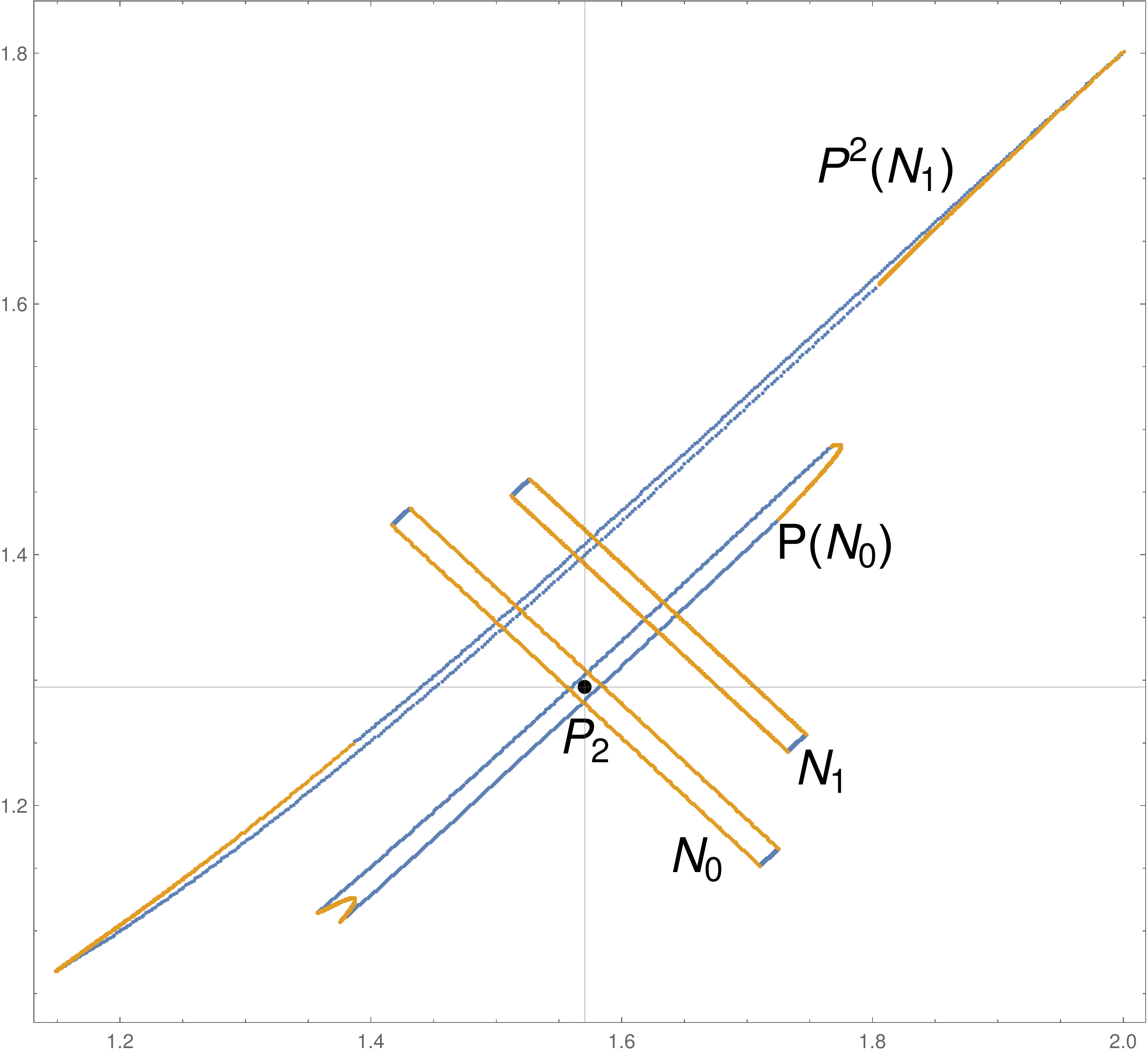}
	\caption{\label{fig:p11}The horseshoes proving symbolic dynamics for the second iteration of the Poincar\'e map $P^2$, connecting the point $P_1$ to itself (to the left) or $P_2$ to itself (to the right). The exit sets and their images are marked in orange.}
\end{figure}

\begin{theorem}
Let $N_0$ and $N_1$ be h-sets of the form $p + A \cdot b\cdot10^{-3}$, where:
\begin{center}
\begin{tabular}{|c||c|c|c|}
\hline
& $p$ & $A$ & $b$ \\
\hline
\hline
$N_0$ & $P_1$ & $M_1$ & $[-1,1]\times[-100,100]$\\
\hline
$N_1$ &$(1.58669; 1.10102)$ & $M_1$ & $[-0.1,0.1]\times[-60,60]$\\
\hline
\end{tabular}
\end{center}
or
\begin{center}
\begin{tabular}{|c||c|c|c|}
\hline
& $p$ & $A$ & $b$ \\
\hline
\hline
$N_0$ & $P_2$ & $M_2$ & $[-10,10]\times[-200,200]$\\
\hline
$N_1$ &$(1.62953; 1.35174)$ & $M_2$ & $[-10,10]\times[-150,150]$\\
\hline
\end{tabular}
\end{center}

Then, in both cases we have the following sequence of covering relations:
\[
N_0 \overset{P}{\Longrightarrow} N_0 \overset{P}{\Longrightarrow} N_1 \overset{P^2}{\Longrightarrow} N_1 \overset{P^2}{\Longrightarrow} N_0\text{,}
\]
which proves the existence of symbolic dynamics for $P^2$ (see Remark \ref{rem:chain}).
\end{theorem}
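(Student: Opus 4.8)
The plan is to regard the theorem as a finite checklist of topological covering relations between the explicitly given parallelogram h-sets and to certify each of them by validated numerics, after which the symbolic-dynamics conclusion follows formally from Remark~\ref{rem:chain}. The chain to be established,
\[
N_0 \overset{P}{\Longrightarrow} N_0 \overset{P}{\Longrightarrow} N_1 \overset{P^2}{\Longrightarrow} N_1 \overset{P^2}{\Longrightarrow} N_0\text{,}
\]
contains exactly four distinct relations: two single coverings under $P$ issuing from $N_0$ and two coverings under $P^2$ issuing from $N_1$. For each relation $M \overset{f}{\Longrightarrow} N$ (with $f=P$ or $f=P^2$) I would pass to the straightened coordinates and study $f_c = c_N\circ f\circ c_M^{-1}$ on the unit box $M_c=[-1,1]^2$. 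Since every h-set here has the form $p + A\cdot b\cdot 10^{-3}$ with $A$ equal to the eigenvector matrix $M_1$ or $M_2$, the change of variables $c_M$ is affine and explicit, the exit set is $\{-1,1\}\times[-1,1]$ and the entry set is $[-1,1]\times\{-1,1\}$, so the covering definition reduces to the exit condition, the entry condition, and the one-dimensional degree (expansion) condition on the unstable coordinate.

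The computational core is to produce, for each of the four relations, a rigorous enclosure of the image of the relevant h-set under the Poincar\'e map. First I would realise $P$ and $P^2$ as return maps to $S:\{f=0\}$ by applying the CAPD validated integrator to the vector field (\ref{rot}) together with a rigorous section-crossing routine; for $P^2$ the trajectory must cross $\{f=0\}$ twice, so the integration time and the accumulated enclosure roughly double. To keep the wrapping effect under control I would work in the eigenvector frames given by $M_1$, $M_2$, in which $DP$ at $P_1$, $P_2$ is nearly diagonal, subdivide each exit edge of $M_c^-$ into sufficiently many sub-boxes, propagate them, and verify that their images lie strictly beyond $N$ in the unstable coordinate and straddle $N$ on the two opposite sides; this simultaneously certifies the exit condition and fixes the correct topological degree. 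Propagating the whole box $M_c$ and checking that its image never reaches the faces $[-1,1]\times\{-1,1\}$ of $N_c$ certifies the entry condition, and the homotopy to the linear model $h_1(x,y)=(A(x),0)$ is immediate because the eigenvector coordinates already expand the unstable direction.

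Once the four relations are certified, I would conclude by invoking Remark~\ref{rem:chain}. The two coverings $N_0 \overset{P}{\Longrightarrow} N_0$ and $N_0 \overset{P}{\Longrightarrow} N_1$ provide, through the orbit-tracking property of covering chains, the two length-two chains of $P$-coverings $N_0\Longrightarrow N_0\Longrightarrow N_0$ and $N_0\Longrightarrow N_0\Longrightarrow N_1$ issuing from $N_0$, while $N_1 \overset{P^2}{\Longrightarrow} N_1$ and $N_1 \overset{P^2}{\Longrightarrow} N_0$ are the direct $P^2$-covers issuing from $N_1$. Together these realise the horseshoe pattern of Remark~\ref{rem:chain} for $k=2$, yielding a continuous surjection onto $\Sigma_2$ that semi-conjugates $P^2$, restricted to the invariant part of $N_0\cup N_1$, to the shift $\sigma$; this is precisely symbolic dynamics for $P^2$.

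I expect the only real difficulty to be quantitative: closing the two $P^2$-coverings. The fixed points $P_1$, $P_2$ are strongly hyperbolic, with unstable multipliers of order $260$ and $27$ respectively, so under $P^2$ the unstable direction is expanded by factors reaching $\sim 10^4$; the certified images of the h-sets become extremely long and thin, and keeping the transverse (stable) width of the enclosure small enough that the entry condition still holds, over an integration that crosses the section twice, is exactly where the wrapping effect is most dangerous. This is presumably why the direct single-iterate horseshoe available for the $P_1$--$P_2$ connection does not suffice here, and the passage to $P^2$, with the carefully tuned box dimensions $b$, becomes necessary. By comparison the affine coordinate changes, the one-dimensional degree computation, and the final appeal to Remark~\ref{rem:chain} are routine.
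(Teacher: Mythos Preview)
Your proposal is correct and matches the paper's approach: the paper's proof is simply ``Computer-assisted, \cite{proof}'', i.e.\ the four covering relations are verified by validated numerics with CAPD and the symbolic-dynamics conclusion is then read off from Remark~\ref{rem:chain}. Your description of how that verification is carried out (affine coordinates from $M_1,M_2$, rigorous integration to the section, subdivision of exit edges, checking exit/entry conditions) is exactly the standard implementation of such a check.
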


\begin{proof}
Computer-assisted, \cite{proof}.
\end{proof}

\subsection{$P_3$ and $P_3$}

To construct the horseshoe connecting $P_3$ to $P_3$, we will need five iterations of $P$ (see Fig. \ref{fig:p33}). Note that for $N_2$ and $N_3$ the direction matrices are corrected to make them compatible to the dynamics along the stable or unstable manifold.

\begin{figure}[h]
	\includegraphics[height=4cm]{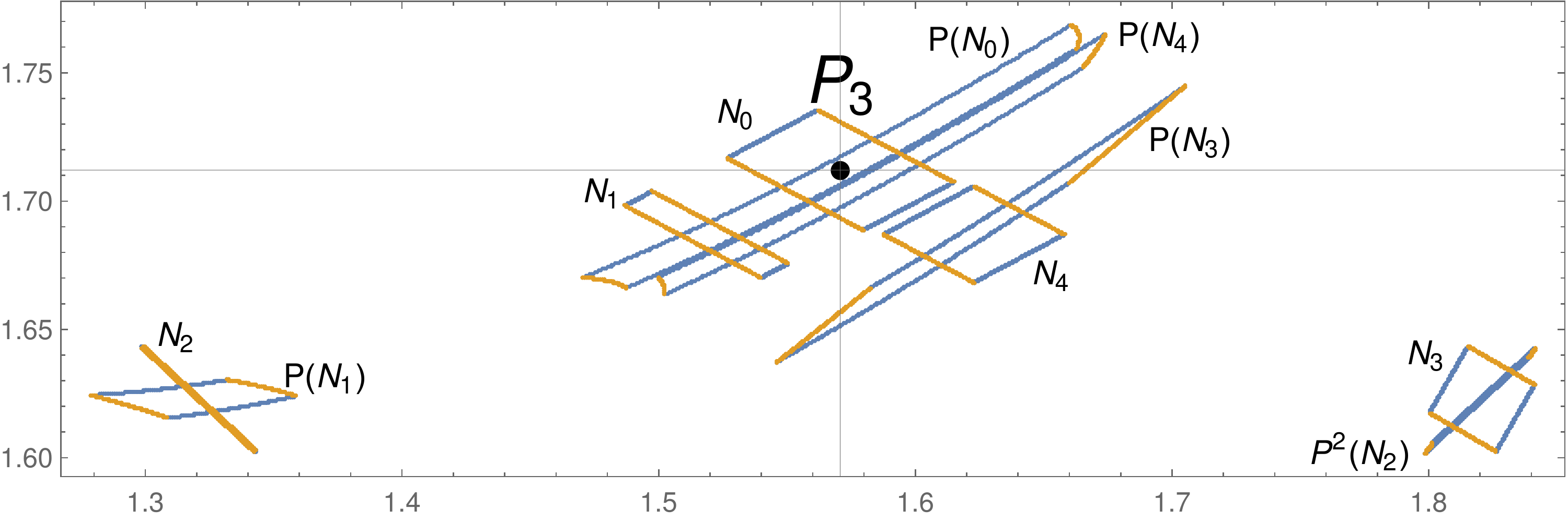}
	\caption{\label{fig:p33}The sequence of covering relations proving symbolic dynamics for $P^5$, connecting the point $P_3$ with itself. The exit sets and their images are marked in orange.}
\end{figure}

\begin{theorem}
Let $N_i$, $i=0,\ldots, 4$ be h-sets of the form $p + A \cdot b\cdot10^{-3}$, where:
\begin{center}
\begin{tabular}{|c||c|c|c|}
\hline
& $p$ & $A$ & $b$ \\
\hline
\hline
$N_0$ & $P_3$ & $M_3$ & $[-20,20]\times[-30,30]$\\
\hline
$N_1$ &$(1.51877;1.68699)$ & $M_3$ & $[-6,6]\times[-30,30]$\\
\hline
$N_2$ &$(1.32082;1.62293)$ & $\begin{bmatrix}
	0.734429 & 0.734429 \\
	0.678686 &-0.678686
\end{bmatrix}$ & $[-0.5,0.5]\times[-30,30]$\\
\hline
$N_3$ &$(1.82077;1.62293)$ & $\begin{bmatrix}	
	0.866025 & -0.5 \\ 0.5 & 0.866025
\end{bmatrix}$ & $[-15,15]\times[-15,15]$\\
\hline
$N_4$ &$(1.62282;1.68699)$ & $M_3$ & $[-20,20]\times[-20,20]$\\
\hline
\end{tabular}
\end{center}

Then we have the following sequence of covering relations:
\[
N_0 \overset{P}{\Longrightarrow} N_0 \overset{P}{\Longrightarrow} N_1 \overset{P}{\Longrightarrow} N_2 \overset{P^2}{\Longrightarrow} N_3 \overset{P}{\Longrightarrow} N_4  \overset{P}{\Longrightarrow} N_0
\text{, and also } N_4 \overset{P}{\Longrightarrow} N_1\text{,}
\]
which proves the existence of symbolic dynamics in $N_0\cup N_1$ for $P^5$ (see Remark \ref{rem:chain}).
\end{theorem}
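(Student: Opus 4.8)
The plan is to reduce the theorem to the rigorous verification of each single covering relation appearing in the displayed chain, together with the extra relation $N_4 \overset{P}{\Longrightarrow} N_1$, and then to assemble these elementary relations into a full two-symbol horseshoe at the level of $P^5$. Reading the stated coverings as a directed graph whose edges are weighted by the number of iterates of $P$, I would exhibit four chains of total $P$-length exactly five connecting the pair $\{N_0,N_1\}$: the self-transition $N_0\to N_0$ by iterating the self-loop $N_0\overset{P}{\Longrightarrow}N_0$ five times; the transition $N_0\to N_1$ by four self-loops followed by $N_0\overset{P}{\Longrightarrow}N_1$; the transition $N_1\to N_0$ along $N_1\overset{P}{\Longrightarrow}N_2\overset{P^2}{\Longrightarrow}N_3\overset{P}{\Longrightarrow}N_4\overset{P}{\Longrightarrow}N_0$ (length $1+2+1+1=5$); and the transition $N_1\to N_1$ along the same route but closed by $N_4\overset{P}{\Longrightarrow}N_1$. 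The two distinct exits from $N_4$ are exactly what supply both outgoing branches from the $N_1$ block, so all four transitions of a $P^5$-horseshoe on $N_0\cup N_1$ are realized; by Remark \ref{rem:chain} this yields the semiconjugacy onto the shift $\sigma$ on $\Sigma_2$. It then remains only to establish each elementary covering and to check that $N_0$ and $N_1$ are disjoint.

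For a single relation $M\overset{P}{\Longrightarrow}N$ (and likewise $\overset{P^2}{\Longrightarrow}$) I would pass to the straightened map $f_c=c_N\circ P\circ c_M^{-1}$, where, since every h-set here is a parallelogram $p+A\cdot b\cdot 10^{-3}$, the charts $c_M,c_N$ are the affine maps carrying the square $[-1,1]^2$ onto $M,N$. With $u=s=1$, verifying the covering reduces to two interval inequalities on a rigorous enclosure of $f_c(M_c)$: the exit condition, that the image of the exit set $M_c^-=\{-1,1\}\times[-1,1]$ has its first (unstable) coordinate lying entirely below $-1$ on one vertical edge and entirely above $+1$ on the other, which at the same time fixes the sign of the scalar $A$ in the homotopy of the covering definition; and the entry condition, in the sufficient form that the second (stable) coordinate of $f_c(M_c)$ stays strictly inside $(-1,1)$, so the image never meets the entry set $N_c^+$. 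In one unstable dimension the required homotopy to $(A(x),0)$ and the degree condition $A(\partial\mathbb{B}_1)\subset\R\setminus\mathbb{B}_1$ then follow automatically from the orientation of the image edges. All these inequalities are checked with interval arithmetic in the CAPD library \cite{CAPD}.

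The computational core is producing enclosures of $P$ and $P^2$ on each parallelogram that are sharp enough to make the two inequalities hold. I would integrate the field (\ref{rot}) with a verified solver, detect the return to $\{f=0\}$ rigorously (using that $f$ is strictly increasing, so the section is crossed transversally and the first-return time is well defined on a small neighbourhood), and propagate the parallelogram as an affine set rather than its interval hull to limit the wrapping effect. The main obstacle is precisely the control of this overestimation over a long chain: among $P_1,P_2,P_3$ the point $P_3$ has the weakest expansion (its unstable eigenvalue is the smallest of the three), so the horseshoe can only be closed after five section returns, including the two-return step $N_2\overset{P^2}{\Longrightarrow}N_3$. This is why the h-sets are taken as thin parallelograms whose $A$-columns follow the local stable and unstable directions, and why for the intermediate sets $N_2,N_3$ the frame is not the raw eigenvector matrix $M_3$ but a matrix corrected to the stable/unstable direction of the connecting segment; with a misaligned frame the stable coordinate of an image would swell past $\pm 1$ and destroy the entry condition. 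Once each covering is certified and disjointness of $N_0,N_1$ is confirmed, the conclusion follows from the horseshoe theorem together with Remark \ref{rem:chain}.
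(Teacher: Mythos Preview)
Your proposal is correct and matches the paper's approach: the paper's own proof is simply ``Computer-assisted, \cite{proof}'', and what you have written is precisely the standard unpacking of that phrase---verify each elementary covering relation with CAPD interval arithmetic via the sufficient exit/entry inequalities for $u=s=1$, then assemble the four length-five chains on $\{N_0,N_1\}$ (using $N_4\overset{P}{\Longrightarrow}N_0$ and $N_4\overset{P}{\Longrightarrow}N_1$ as the two outgoing branches) to invoke Remark~\ref{rem:chain}. Your discussion of why the frames at $N_2,N_3$ are adjusted and why five returns are needed near $P_3$ is also consistent with the paper's remarks.
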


\begin{proof}
Computer-assisted, \cite{proof}.
\end{proof}

\subsection{$P_1$ and $P_3$, $P_2$ and $P_3$}

To connect $P_1$ and $P_3$ we need the fourth iteration of $P$ (see Fig. \ref{fig:p13}), but this time we find only a one-way chain of covering relations from $P_1$ to $P_3$. 
Thanks to the time-reversing symmetry and the fact that we choose $N_0$ and $N_4$ symmetrical related to the $\theta = \frac{\pi}{2}$ line, we will be able to close this chain.

\begin{figure}[h]
	\includegraphics[height=10cm]{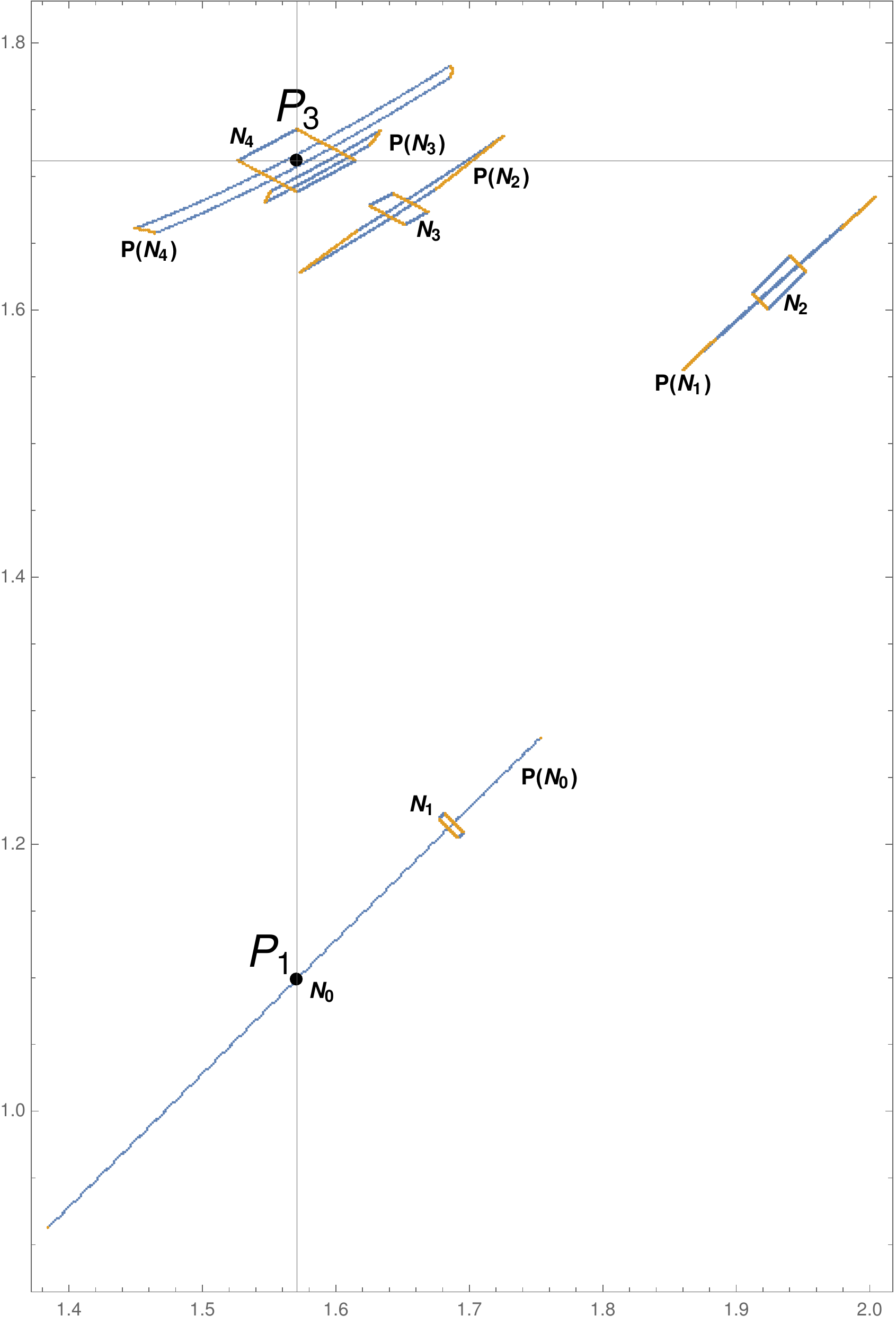}\hfil
	\includegraphics[height=10cm]{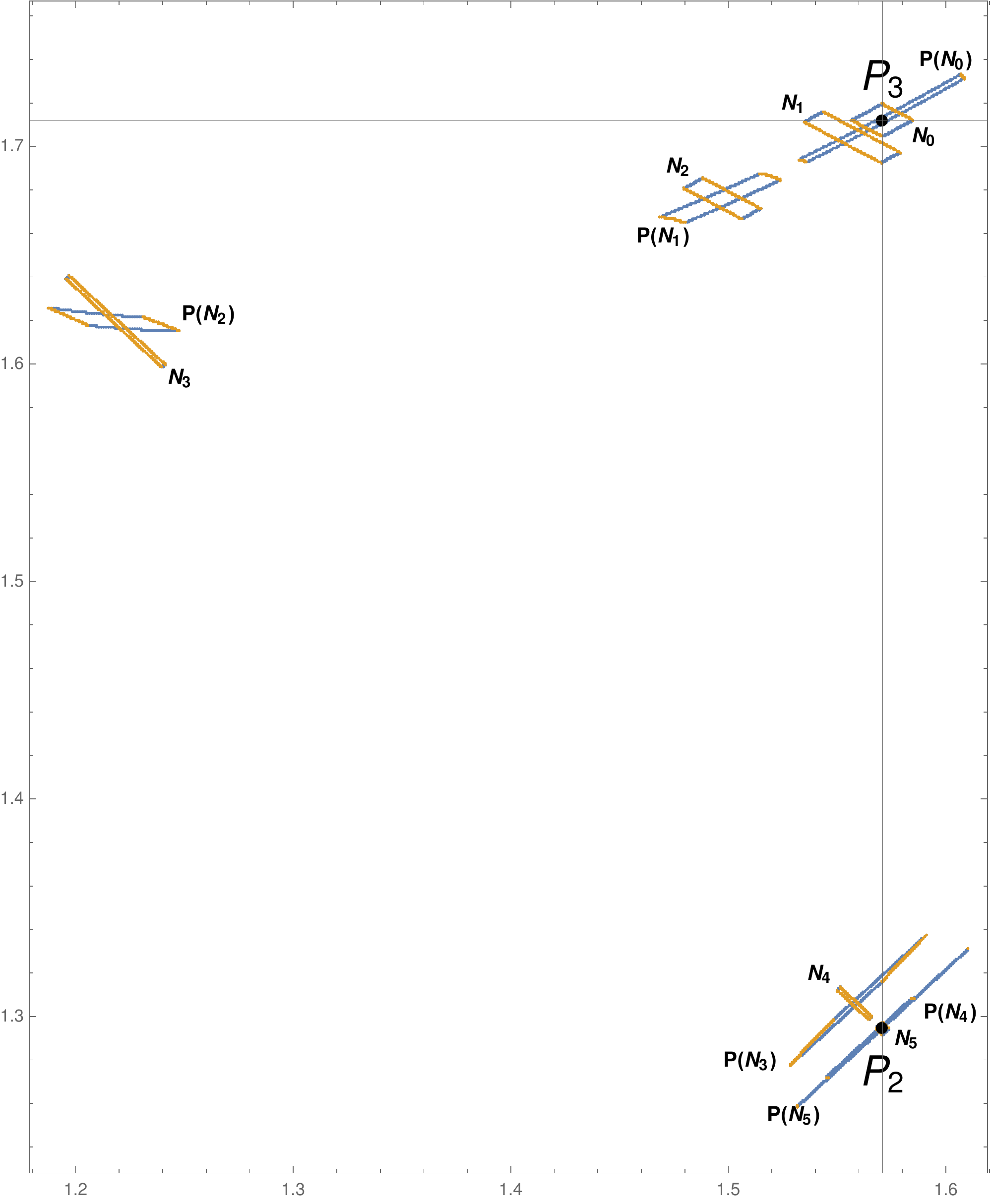}
	\caption{\label{fig:p13}\underline{To the left:} the right half of the horseshoe proving symbolic dynamics for $P^4$, connecting the points $P_1$ and $P_3$.
	\underline{To the right:} the left half of the horseshoe proving symbolic dynamics for $P^5$, connecting the points $P_3$ and $P_2$. The exit sets and their images are marked in orange.}
\end{figure}

\begin{theorem}\label{th:P13}
Let $N_i$, $i=0,\ldots,4$ be h-sets of the form $p + A \cdot b\cdot10^{-3}$, where:
\begin{center}
\begin{tabular}{|c||c|c|c|}
\hline
& $p$ & $A$ & $b$ \\
\hline
\hline
$N_0$ & $P_1$ & $M_1$ & $[-1,1]\times[-1,1]$\\
\hline
$N_1$ &$(1.68635;1.21391)$ & $M_1$ & $[-3,3]\times[-10,10]$\\
\hline
$N_2$ &$(1.93186;1.62049)$ & $M_1$ & $[-20,20]\times[-8,8]$\\
\hline
$N_3$ & $(1.6471;1.67581)$ & $M_3$ & $[-10,10]\times[-15,15]$\\
\hline
$N_4$ &$P_3$ & $M_3$ & $[-25,25]\times[-25,25]$\\
\hline
\end{tabular}
\end{center}

Then the following covering relations occur:
\begin{gather*}
N_0 \overset{P}{\Longrightarrow} N_0 \overset{P}{\Longrightarrow} N_1 \overset{P}{\Longrightarrow} N_2 \overset{P}{\Longrightarrow} N_3\overset{P}{\Longrightarrow} N_4 \overset{P}{\Longrightarrow} N_4
\text{.}
\end{gather*}
\end{theorem}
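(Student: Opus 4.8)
The plan is to prove the statement by establishing each of the six covering relations in the displayed chain separately, since the chain is nothing more than the conjunction of the individual relations $N_i \overset{P}{\Longrightarrow} N_j$, and all of them use a single application of the first-return map $P$ (no higher iterates or back-coverings appear here, which keeps every check to one section-to-section integration). Because all h-sets carry $u(N_i)=s(N_i)=1$, each relation reduces, via the simplified picture of Section \ref{sec:chaos}, to a pair of geometric conditions on the image $P(N_i)$ read in the target coordinate system $c_{N_j}$: in the exit (unstable) direction the image must stretch completely across $N_j$, with its two vertical edges thrown to opposite sides of $[-1,1]$; while in the entry (stable) direction the whole image must remain strictly inside $(-1,1)$. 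These are exactly the exit and entry conditions of the covering definition, realized by the straight-line homotopy from $P_c$ to its expanding linear model.

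Concretely, I would encode each parallelogram $N_i = p + A \cdot b \cdot 10^{-3}$ as an initial set on the section $\{f=0\}$. Since $f' = (1+e\cos f)^2/(1-e^2)^{3/2}$ is strictly positive, the section is everywhere transverse to the vector field of (\ref{rot}), so $P$ is well defined and rigorously computable: I would integrate (\ref{rot}) with the CAPD $C^0$/$C^1$ Lohner-type solver until each trajectory crosses $\{f=0\}$ again, producing a guaranteed outer enclosure of $P(N_i)$ in phase coordinates. Transforming this enclosure by $c_{N_j} = A_j^{-1}(\,\cdot\, - p_j)$ in interval arithmetic places it in the target frame, where the two inequalities above are checked directly. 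The terminal relations $N_0 \overset{P}{\Longrightarrow} N_0$ and $N_4 \overset{P}{\Longrightarrow} N_4$ are the self-coverings already exhibited for $P_1$ and $P_3$ in Section \ref{sec:tools} (Fig. \ref{fig:PiCov}); the four intermediate relations are verified by the identical procedure, the only change being that $N_0,\dots,N_4$ are positioned along the heteroclinic chain joining $P_1$ to $P_3$.

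The main obstacle I expect is controlling the overestimation, i.e. the wrapping effect, in the rigorous integration. The linearizations at $P_1$ and $P_3$ have unstable eigenvalues of order $261$ and $5$, so $P$ stretches these thin sets by large factors, and a crude enclosure of $P(N_i)$ would be far too wide to separate from the entry set of $N_j$, spoiling the covering inequalities. To counter this I would, first, align the frames $A_i$ with the local stable/unstable directions, which is precisely why the tables use the eigenvector matrices $M_1$ and $M_3$ and why $N_2$ and $N_3$ carry corrected direction matrices pointing along the manifolds, so that expansion acts essentially along one coordinate; and second, subdivide each h-set into a fine grid of sub-boxes, integrate each piece separately, and take the union of the images, so that the enclosure of $P(N_i)$ stays tight enough for both inequalities to hold with a rigorous margin. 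Once all six enclosures pass these tests, the chain of covering relations holds, which is exactly the assertion of the statement.
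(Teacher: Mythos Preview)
Your approach is correct and is exactly what the paper does: the proof in the paper is simply ``Computer-assisted, \cite{proof},'' i.e.\ rigorous CAPD integration of each $N_i$ across one return, change of coordinates to the target frame, and verification of the exit/entry inequalities, with subdivision to control wrapping. Two small inaccuracies to fix: in this particular theorem $N_2$ and $N_3$ use the plain eigenvector matrices $M_1$ and $M_3$, not corrected directions (you are thinking of the $P_3$--$P_3$ theorem), and the self-coverings of Fig.~\ref{fig:PiCov} use different box sizes than the $N_0$, $N_4$ here, so those two relations must be re-verified with the present data rather than quoted.
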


\begin{proof}
Computer-assisted, \cite{proof}.
\end{proof}

\begin{corollary}\label{cor:13}
Let $N_i$, $i=0,\ldots,4$ be h-sets defined in Theorem \ref{th:P13}. Then there is symbolic dynamics in $N_0 \cup N_4$ for $P^4$.
\end{corollary}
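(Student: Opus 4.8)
The plan is to exhibit $N_0$ and $N_4$ as the two symbols of a horseshoe for $P^4$ and then invoke Remark \ref{rem:chain}, which packages such data into symbolic dynamics through the orbit-tracking Theorem \ref{th:periodic}. For the first chain of that remark I would take verbatim the chain of Theorem \ref{th:P13}, namely the self-covering $N_0 \overset{P}{\Longrightarrow} N_0$ followed by the length-$4$ path $N_0 \overset{P}{\Longrightarrow} N_1 \overset{P}{\Longrightarrow} N_2 \overset{P}{\Longrightarrow} N_3 \overset{P}{\Longrightarrow} N_4$. Together with the self-covering $N_4 \overset{P}{\Longrightarrow} N_4$, the only missing ingredient is a second length-$4$ chain running from $N_4$ back to $N_0$, and the idea is to recover it from the time-reversing symmetry $R$ rather than from any further numerical covering computation.

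For that second chain I would use that covering relations are invariant under conjugation by a homeomorphism: straightening the reflected sets by $c_N\circ R^{-1}$ leaves the map $f_c$ from the definition of covering unchanged, so $M \overset{P}{\Longrightarrow} N$ forces $R(M) \overset{RPR^{-1}}{\Longrightarrow} R(N)$. Since the symmetry derived in Section \ref{sec:model} gives $R P R = P^{-1}$ (and $R^{-1}=R$), this reads $R(M)\overset{P^{-1}}{\Longrightarrow}R(N)$, which by the definition of back-covering is exactly $R(N)^T \overset{P}{\Longleftarrow} R(M)^T$. Applying this to each arrow $N_i \overset{P}{\Longrightarrow} N_{i+1}$ of Theorem \ref{th:P13} produces the back-covering chain $R(N_4)^T \overset{P}{\Longleftarrow} R(N_3)^T \overset{P}{\Longleftarrow} R(N_2)^T \overset{P}{\Longleftarrow} R(N_1)^T \overset{P}{\Longleftarrow} R(N_0)^T$, of length $4$ and built entirely from data already certified by the computer.

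The delicate point is matching the endpoints. Because $P_1$ and $P_3$ lie on $\{\theta=\frac{\pi}{2}\}$, the reflection $R(\theta,\phi)=(\pi-\theta,\phi)$ fixes the centres of $N_0$ and $N_4$; moreover near such a point $R$ acts as $\mathrm{diag}(-1,1)$ and therefore interchanges the two eigendirections encoded in $M_1$ (resp. $M_3$), i.e. it swaps the unstable (exit) and stable (entry) directions. As the defining boxes of $N_0$ and $N_4$ are symmetric squares, this yields $R(N_0)=N_0^T$ and $R(N_4)=N_4^T$ as h-sets, hence $R(N_0)^T=N_0$ and $R(N_4)^T=N_4$, so the reflected chain genuinely runs from $N_4$ back to $N_0$; together with $N_4 \overset{P}{\Longrightarrow} N_4$ it is precisely the second chain of Remark \ref{rem:chain}. \textbf{The main obstacle} I anticipate is exactly this h-set bookkeeping through $R$: one must verify that $R$ carries the exit set of $N_0$ (resp. $N_4$) onto its entry set and confirm $R\circ P\circ R=P^{-1}$ at the level of the return map, so that the conjugated homotopy inherits the expanding linear model on the correct direction. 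Once these are in place, Remark \ref{rem:chain} delivers symbolic dynamics for $P^4$ in $N_0\cup N_4$, and no direct composition of covering relations—which the text warns need not hold—is required.
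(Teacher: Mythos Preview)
Your argument is correct and follows the same route as the paper: use the time-reversing symmetry $R$ (via $RPR=P^{-1}$) to turn the verified chain $N_0\Rightarrow\cdots\Rightarrow N_4$ into the back-covering chain $R(N_4)^T\Leftarrow\cdots\Leftarrow R(N_0)^T$, then close it by observing that the symmetric square boxes at $P_1$ and $P_3$ give $R(N_0)^T=N_0$ and $R(N_4)^T=N_4$, and conclude with Remark~\ref{rem:chain}. Your extra remarks on why $R$ swaps the exit and entry directions of $N_0$ and $N_4$ are exactly the bookkeeping the paper leaves implicit.
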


\begin{proof}
Consider the h-sets $R(N_i)^T$, $i=0,\ldots ,4$.
Using the time-reversing symmetry $R$ it is clear that
\begin{gather*}
R(N_0) \overset{P^{-1}}{\Longrightarrow} R(N_1) \overset{P^{-1}}{\Longrightarrow} R(N_2) \overset{P^{-1}}{\Longrightarrow} R(N_3)\overset{P^{-1}}{\Longrightarrow} R(N_4)
\text{,}
\end{gather*}
hence, from the definition of back-covering,
\begin{gather*}
R(N_4)^T \overset{P}{\Longleftarrow} R(N_3)^T \overset{P}{\Longleftarrow} R(N_2)^T \overset{P}{\Longleftarrow} R(N_1)^T\overset{P}{\Longleftarrow} R(N_0)^T
\text{.}
\end{gather*}

We have chosen $N_0$ and $N_4$ to be $\{\theta=\frac{\pi}{2}\}$-line symmetrical, so $R(N_0)^T=N_0$ and $R(N_4)^T=N_4$.
Therefore we get the full chain of covering and back-covering relations in the form:
\begin{gather*}
N_0 \overset{P}{\Rightarrow} N_0 \overset{P}{\Rightarrow} N_1 \overset{P}{\Rightarrow} N_2 \overset{P}{\Rightarrow} N_3\overset{P}{\Rightarrow}
 N_4 \overset{P}{\Rightarrow} N_4 \overset{P}{\Leftarrow} R(N_3)^T \overset{P}{\Leftarrow} R(N_2)^T \overset{P}{\Leftarrow} R(N_1)^T\overset{P}{\Leftarrow} N_0
\text{,}
\end{gather*}
which proves the existence of symbolic dynamics in $N_0\cup N_4$ for $P^4$, with the use of Remark \ref{rem:chain}.

\end{proof}

To connect $P_2$ and $P_3$ with symbolic dynamics, we need the fifth iteration of $P$ and time-reversing symmetry, see Fig. \ref{fig:p13} (right) for illustration.

\begin{theorem}\label{th:P23}
Let $N_i$, $i=0,\ldots,5$ be h-sets of the form $p + A \cdot b\cdot10^{-3}$, where:
\begin{center}
\begin{tabular}{|c||c|c|c|}
\hline
& $p$ & $A$ & $b$ \\
\hline
\hline
$N_0$ & $P_3$ & $M_3$ & $[-8,8]\times[-8,8]$\\
\hline
$N_1$ &$(1.5569;1.70419)$ & $M_3$ & $[-5,5]\times[-20,20]$\\
\hline
$N_2$ &$(1.49724;1.67606)$ & $M_3$ & $[-5,5]\times[-15,15]$\\
\hline
$N_3$ & $(1.21834;1.61946)$ & $M_2$ & $[-1,1]\times[-30,30]$\\
\hline
$N_4$ & $(1.55807;1.30592)$ & $M_2$ & $[-1,1]\times[-10,10]$\\
\hline
$N_5$ &$P_2$ & $M_2$ & $[-2,2]\times[-2,2]$\\
\hline
\end{tabular}
\end{center}

Then we have the following sequence of covering relations:
\begin{gather*}
N_0 \overset{P}{\Longrightarrow} N_0 \overset{P}{\Longrightarrow} N_1 \overset{P}{\Longrightarrow} N_2 \overset{P}{\Longrightarrow} N_3\overset{P}{\Longrightarrow} N_4 \overset{P}{\Longrightarrow} N_5\overset{P}{\Longrightarrow} N_5
\text{.}
\end{gather*}
\end{theorem}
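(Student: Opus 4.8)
The plan is to establish the chain as seven independent single-step covering relations --- the two self-coverings $N_0 \overset{P}{\Longrightarrow} N_0$ and $N_5 \overset{P}{\Longrightarrow} N_5$, and the five transport relations $N_0 \Rightarrow N_1 \Rightarrow N_2 \Rightarrow N_3 \Rightarrow N_4 \Rightarrow N_5$ --- each verified by rigorous interval computation in the spirit of \cite{WZ1, CAPD}. Since every arrow is a single application of $P$, there is no issue of composing coverings: each relation $N_i \overset{P}{\Longrightarrow} N_{i+1}$ reduces to a finite list of interval inequalities that a computer can check. The first ingredient is a rigorous integrator for $P$ itself. Because $f$ is strictly increasing by (\ref{f}), the section $\{f=0\}$ is crossed transversally and the first-return time is well defined; using the rigorous Poincar\'e-map routines of the CAPD library (a $C^1$ Lohner-type algorithm) I would integrate (\ref{rot}) from $\{f=0\}$ to $\{f=2\pi\}$ and obtain, for any interval box of initial data, a guaranteed enclosure of its image under $P$.

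For each h-set I would work in its affine chart $c_{N_i}$ determined by the parametrization $|N_i| = p + A\cdot b \cdot 10^{-3}$. Under $c_{N_i}$ the support becomes $[-1,1]^2$, the exit set $N_i^-$ is the pair of vertical edges $\{-1,1\}\times[-1,1]$, and the entrance set $N_i^+$ is the pair of horizontal edges $[-1,1]\times\{-1,1\}$, since $u(N_i)=s(N_i)=1$. To verify one covering $N_i \overset{P}{\Longrightarrow} N_{i+1}$ I would then carry out three checks, expressing all images in the target chart $c_{N_{i+1}}$: (i) enclose $P$ of the left edge of $N_i$ and confirm that its exit coordinate lies entirely above $+1$; (ii) enclose $P$ of the right edge and confirm that its exit coordinate lies entirely below $-1$ (or the reverse signs, taken consistently); (iii) enclose $P(|N_i|)$ and confirm that its entry coordinate stays strictly inside $(-1,1)$. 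Checks (i)--(ii) give both the exit condition and the orientation requirement of the covering definition --- the straight-line homotopy from $P$ to the linear map $A(x)=\pm\lambda x$, $\lambda>1$, never returns the exit coordinate into $[-1,1]$ --- while (iii) gives the entry condition, as the second coordinate of the homotopy interpolates between a value in $(-1,1)$ and $0$. The two self-coverings are the most transparent, since $M_3$ and $M_2$ are eigenvector frames at $P_3$ and $P_2$, so in these charts $P$ is nearly diagonal and stretches cleanly along the exit direction.

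The main obstacle will be controlling the growth of the enclosures under integration, particularly the \emph{wrapping effect} near the hyperbolic points where the flow expands strongly in the unstable direction. This is exactly why the h-sets are taken as thin parallelograms aligned with $M_2$, $M_3$: aligning the chart with the local stable and unstable directions keeps $P(|N_i|)$ close to a thin rectangle in $c_{N_{i+1}}$, so that the entry coordinate in check (iii) does not overflow $(-1,1)$ and the two exit edges in checks (i)--(ii) separate cleanly to opposite sides. To make the interval inequalities close I expect to subdivide each edge (and the whole support) into sufficiently many subboxes and to tune the widths in the base cubes $b$ --- large enough that the images genuinely stretch across the next set, small enough that the enclosures remain sharp. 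The delicate case is the passage $N_3 \Rightarrow N_4 \Rightarrow N_5$ back toward $P_2$, where the frame switches from the $M_3$-direction to $M_2$ and the transient excursion must be enclosed tightly enough that all five transport coverings close simultaneously.
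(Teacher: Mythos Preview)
Your proposal is correct and matches the paper's own proof, which is simply ``Computer-assisted, \cite{proof}'': you have spelled out exactly the rigorous CAPD verification (enclose $P$ of each edge and the full support in the target chart, check the exit and entry inequalities, subdivide as needed) that the cited code carries out. One minor slip: the change of frame from $M_3$ to $M_2$ occurs at the step $N_2\Rightarrow N_3$, not at $N_3\Rightarrow N_4$, but this does not affect the argument.
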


\begin{proof}
Computer-assisted, \cite{proof}.
\end{proof}

\begin{corollary}
Let $N_i$, $i=0,\ldots,5$ be h-sets defined in Theorem \ref{th:P23}. Then there is symbolic dynamics in $N_0 \cup N_5$ for $P^5$.
\end{corollary}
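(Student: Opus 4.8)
The plan is to reproduce, almost verbatim, the argument already used for Corollary~\ref{cor:13}. The one-way chain furnished by Theorem~\ref{th:P23} runs from $N_0$ (around $P_3$) to $N_5$ (around $P_2$) and carries self-coverings $N_0 \overset{P}{\Longrightarrow} N_0$ and $N_5 \overset{P}{\Longrightarrow} N_5$ at both ends, so the only thing missing is a return chain from $N_5$ back to $N_0$. I would manufacture this return chain for free from the time-reversing symmetry $R$, precisely because the two endpoint h-sets have been set up to be invariant under the relevant combination of $R$ and the exit/entry swap $(\cdot)^T$. Once the chain is closed, Remark~\ref{rem:chain} does the rest.

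First I would apply $R$ to the ``path'' portion of the chain. Since $R$ conjugates $P$ to $P^{-1}$ on the section $S$, each covering $N_i \overset{P}{\Longrightarrow} N_{i+1}$ turns into $R(N_i) \overset{P^{-1}}{\Longrightarrow} R(N_{i+1})$, so that
\[
R(N_0) \overset{P^{-1}}{\Longrightarrow} R(N_1) \overset{P^{-1}}{\Longrightarrow} R(N_2) \overset{P^{-1}}{\Longrightarrow} R(N_3) \overset{P^{-1}}{\Longrightarrow} R(N_4) \overset{P^{-1}}{\Longrightarrow} R(N_5).
\]
By the definition of back-covering, namely $M \overset{P}{\Longleftarrow} N$ iff $N^T \overset{P^{-1}}{\Longrightarrow} M^T$, each such relation is the same as $R(N_{i+1})^T \overset{P}{\Longleftarrow} R(N_i)^T$, and chaining these gives
\[
R(N_5)^T \overset{P}{\Longleftarrow} R(N_4)^T \overset{P}{\Longleftarrow} R(N_3)^T \overset{P}{\Longleftarrow} R(N_2)^T \overset{P}{\Longleftarrow} R(N_1)^T \overset{P}{\Longleftarrow} R(N_0)^T.
\]

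Next I would invoke that $N_0$ and $N_5$ are placed symmetrically about $\{\theta=\frac{\pi}{2}\}$: their base points $P_3$, $P_2$ lie on that line and their defining boxes $[-8,8]\times[-8,8]$ and $[-2,2]\times[-2,2]$ are square, so $R(N_0)^T = N_0$ and $R(N_5)^T = N_5$. Substituting these into the endpoints of the back-covering chain and adjoining the two self-coverings from Theorem~\ref{th:P23}, I obtain the two chains
\[
N_0 \overset{P}{\Longleftrightarrow} N_0 \overset{P}{\Longleftrightarrow} N_1 \overset{P}{\Longleftrightarrow} N_2 \overset{P}{\Longleftrightarrow} N_3 \overset{P}{\Longleftrightarrow} N_4 \overset{P}{\Longleftrightarrow} N_5 = M_0
\]
\[
M_0 \overset{P}{\Longleftrightarrow} M_0 \overset{P}{\Longleftrightarrow} R(N_4)^T \overset{P}{\Longleftrightarrow} R(N_3)^T \overset{P}{\Longleftrightarrow} R(N_2)^T \overset{P}{\Longleftrightarrow} R(N_1)^T \overset{P}{\Longleftrightarrow} N_0,
\]
which are exactly of the shape demanded by Remark~\ref{rem:chain} with $k=5$ and $M_0 = N_5$. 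Remark~\ref{rem:chain} then yields symbolic dynamics for $P^5$ in $N_0\cup N_5$, as claimed.

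The only genuinely delicate step is the identity $R(N_0)^T = N_0$ and $R(N_5)^T = N_5$: one must check not merely that the supports are reflection-invariant (which the on-axis base points and the square boxes guarantee) but that the exit/entry decomposition is carried correctly, i.e. that conjugating the direction matrices $M_3$, $M_2$ by $\operatorname{diag}(-1,1)$ realizes exactly the column swap-and-sign that the transpose $(\cdot)^T$ reverses. Once this bookkeeping is confirmed, the corollary is a pure symmetry statement, since all the hard, computer-assisted content already resides in Theorem~\ref{th:P23}.
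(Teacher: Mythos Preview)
Your proof is correct and follows essentially the same approach as the paper's own argument: apply the time-reversing symmetry $R$ to the one-way chain from Theorem~\ref{th:P23}, convert the resulting $P^{-1}$-coverings into $P$-back-coverings via the $(\cdot)^T$ operation, use the $\{\theta=\tfrac{\pi}{2}\}$-symmetry of $N_0$ and $N_5$ to match endpoints, and invoke Remark~\ref{rem:chain}. Your write-up is in fact slightly more careful than the paper's, since you explicitly cast the result into the two-chain form of Remark~\ref{rem:chain} and flag the bookkeeping needed for $R(N_j)^T=N_j$.
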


\begin{proof}
Analogously as in the proof of Corollary \ref{cor:13}, the lacking h-sets on the right half-plane are simply $R(N_i)^T$, $i=0,\ldots ,5$.
Using the time-reversing symmetry and the definition of back-covering,
\begin{gather*}
R(N_5)^T \overset{P}{\Longleftarrow} R(N_4)^T \overset{P}{\Longleftarrow} R(N_3)^T \overset{P}{\Longleftarrow} R(N_2)^T \overset{P}{\Longleftarrow} R(N_1)^T\overset{P}{\Longleftarrow} R(N_0)^T
\text{.}
\end{gather*}

Again, we have chosen $N_0$ and $N_5$ to be $\{\theta=\frac{\pi}{2}\}$-line symmetrical, so $R(N_0)^T=N_0$ and $R(N_5)^T=N_5$.
Therefore we get the full chain of covering relations:
\begin{gather*}
N_0 \overset{P}{\Rightarrow} N_1 \overset{P}{\Rightarrow} N_2 \overset{P}{\Rightarrow} N_3\overset{P}{\Rightarrow}
 N_4 \overset{P}{\Rightarrow} N_5 \overset{P}{\Leftarrow} R(N_4)^T \overset{P}{\Leftarrow}  R(N_3)^T \overset{P}{\Leftarrow} R(N_2)^T \overset{P}{\Leftarrow} R(N_1)^T\overset{P}{\Leftarrow} N_0
\text{,}
\\
\text{and }\quad N_0 \overset{P}{\Rightarrow} N_0\text{, }\quad N_5 \overset{P}{\Rightarrow}N_5
\text{,}
\end{gather*}
which proves the existence of symbolic dynamics in $N_0\cup N_5$ for $P^5$.
\end{proof}




\bibliographystyle{plain}

\bibliography{Hyperion_bibliography}

\end{document}